\numberwithin{equation}{section}
\newtheorem{thm}{Theorem}
\newtheorem{lem}{Lemma}
\newtheorem{cor}{Corollary}
\newtheorem{pro}{Proposition}
\newtheorem{rmk}{Remark}
\newtheorem{ass}{Assumption}
\newcommand {\emptycomment}[1]{}
\newcommand{\be }{\begin{equation}}
\newcommand{\ee }{\end{equation}}
\newcommand{\huaB}{\mathcal{B}}
\newcommand{\huaF}{\mathcal{F}}
\newcommand{\huaG}{\mathcal{G}}
\newcommand{\huaV}{\mathcal{V}}
\newcommand{\huaX}{\mathcal{X}}
\newcommand{\huaK}{\mathcal{K}}
\newcommand{\g}{\mathfrak g}
\newcommand{\nono}{\nonumber}
\newcommand{\e}{\mathbbm{e}}
\newcommand{\w}{\widetilde}
\newcommand{\f}{\frac}
\newcommand{\ttt}{\theta}
\newcommand{\aaa}{\alpha}
\newcommand{\nn}{\langle}
\newcommand{\mm}{\rangle}
\newcommand{\ww}{\widetilde}
\newcommand{\mbb}{\mathbb}
\newcommand{\eee}{\epsilon}
\newcommand{\ooo}{\omega}
\def\w{\widetilde}
\def\bea{\begin{eqnarray}}
	\def\eea{\end{eqnarray}}
\def\be{\begin{equation}}
	\def\ee{\end{equation}}
\def\blm{\begin{lem}}
	\def\elm{\end{lem}}
\def\btm{\begin{theorem}}
	\def\etm{\end{theorem}}
\def\ff{\Phi}
\def\e{\mathcal{E}}
\def\nb{\nabla}
\def\g{\gamma}
\def\bea{\begin{eqnarray}}
	\def\eea{\end{eqnarray}}
\def\be{\begin{equation}}
	\def\ee{\end{equation}}
\def\blm{\begin{lem}}
	\def\elm{\end{lem}}
\def\kk{\kappa}
\def\bea{\begin{eqnarray}}
\def\eea{\end{eqnarray}}
\def\be{\begin{equation}}
\def\ee{\end{equation}}
\def\blm{\begin{lem}}
\def\elm{\end{lem}}
\def\bs{\begin{small}}
\def\es{\end{small}}
\def\kk{\kappa}
\begin{document}
\title{
Distributed Stochastic Constrained Composite Optimization over Time-Varying Network with a Class of Communication Noise\thanks{ Zhan Yu, Daniel W. C. Ho and Jie Liu are with the Department of Mathematics,
	City University of Hong Kong, Kowloon, Hong Kong (e-mail: zhanyu2-c@my.cityu.edu.hk, mathyuzhan@gmail.com; madaniel@cityu.edu.hk; jliu285-c@my.cityu.edu.hk), Deming Yuan is with the School of Automation, Nanjing University of Science and Technology, Nanjing, China (e-mail: dmyuan1012@gmail.com)}
 }\vspace{2mm}
\author{ Zhan Yu, Daniel W. C. Ho, Deming Yuan, Jie Liu}
\date{}



\maketitle

\begin{abstract}
	This paper is concerned with  distributed stochastic multi-agent constrained optimization problem over  time-varying network with a class of communication noise.  This paper considers the problem in composite optimization setting which is more general in the literature of noisy network optimization. It is noteworthy that the mainstream existing methods for noisy network optimization are Euclidean projection based. Based on Bregman projection-based mirror descent scheme, we present a non-Euclidean  method and investigate their convergence behavior. This method is the distributed stochastic composite mirror descent type method (DSCMD-N) which provides a more general algorithm framework. Some new error bounds for DSCMD-N are obtained.
	To the best of our knowledge, this is the first work to analyze and derive convergence rates of optimization algorithm in noisy network optimization. We also show that an optimal rate of $O(1/\sqrt{T})$ in nonsmooth convex optimization  can be obtained for the proposed method under appropriate communication noise condition. Moveover,  novel convergence results  are comprehensively derived in  expectation convergence, high probability convergence and almost surely sense.
\end{abstract}

\section{Introduction}
In recent years,  distributed consensus control and optimization problems over networked system are studied extensively ( \cite{s1, 2016, direct1, n2, ut, sto, dmcb1, dmcb2, ljy, duchi, n3, py, s2, smart, discom, timev2, wj, hong2, ccc, hong, yzhang}). These problems arise in a variety of application domains, such as localization in sensor networks (e.g. \cite{sensor}, \cite{yzhang}), smart grid (e.g. \cite{smart}), utility maximization (e.g. \cite{ut}), allocation of resources in microeconomics (e.g. \cite{micro}). On the other hand, there always exists noise in realistic scenario, intrinsic disturbances with different types of noises often appear in many multi-agent networked systems. For networked systems with active communication channels, besides the inherent disturbance, probably one of the most important issues on disturbance is the communication noise. The communication noise is unavoidable in signal transmission and information communication process. Recently, there have been many works circling around the effects of noise among nodes of the networks on control or optimization methods (e.g. \cite{adv1, n1, n2, noiau, jq, zd1, fl, n3, noiz, zd2, zd4, zd3}). These works focus on different classes of noises, including some common classes such as  bounded noise, decay noise, mean-zero noise.

In this paper, we mainly consider the minimization of a sum of locally known convex functions that are distributed over a network with a class of communication noise. In this model,  each agent has its own associated (perhaps nonsmooth) objective function (e.g. \cite{s1, direct1, n2, sto, s2}). For solving this kind of problem, a variety of methods have emerged recently. In these methods, distributed optimization method has been shown to be one of the most powerful methods for its advantage of saving energy and reducing unnecessary waste of resources. Recent years have witnessed progress of distributed optimization in numerous aspects. Modern studies on distributed optimization start from the classical distributed subgradient method (\cite{s1}). The seminal research \cite{s1} is inspired by a deterministic gradient descent model over network system. Also, their work treats with unconstrained decision variable. Consequently, studies on distributed stochastic subgradient method appears (\cite{s2}). Disturbance on subgradient is considered to capture the dynamical environment in real world. In \cite{2016}, a distributed gradient-push method is established without requiring information of either the number of agents or the graph sequence. In the same period, the work \cite{direct1} provides a novel distributed method to better capture the direct structure of the network topology, the convergence relies on a core matrix analysis result in \cite{kc}. In a different way, the work in \cite{duchi} presents a (stochastic) dual averaging-based method, the method is based on maintaining and forming weighted averages of subgradients throughout the network. In what follows, several works which improve \cite{duchi} appear (e.g. \cite{duallee, duals}).
On the other hand, several works also turn to investigate the case when local objective functions are nonconvex (e.g. \cite{ccc}). Mirror descent technique has been utilized in distributed optimization domain recently (e.g. \cite{lmd}), one of the main features of mirror descent is that it can better reflect the geometry of underlying space. Moreover, online distributed optimization has also become a new direction recently (e.g. \cite{yuanmd, ljy, timev2}), online distributed methods are often investigated to handle the dynamical environment of local objective functions. Based on the sum structure of the global objective function, a great deal of works aiming at solving the problem are consensus-based.  The realization of consensus is an essentially necessary condition for the convergence of these methods.

The main goal of this paper is to study  distributed optimization problems by addressing following considerations: (i) Since uncertain stochastic disturbances always exist in real life environment, it is desirable to consider the topic of solving distributed optimization problem over network in which some class of communication noises exist among nodes; (ii) The existing optimization methods over noisy network are Euclidean gradient projection based (see e.g. \cite{n2,n3}), is it possible to consider some more general frameworks and provide general methods to solve them in some class of noisy network? (iii) Although under suitable conditions, in the setting of distributed optimization when communication noise exists over network, almost surely existence result of the optimal solution is proven for gradient descent-based methods in main existing works such as \cite{n2}, \cite{n3}. Explicit description of convergence rate is still absent in this literature. Is it possible to derive convergence rate results in settings when some class of communication noise exists? Also, the optimization methods are very poorly explored over noisy network, it is desirable to develop some algorithms for such optimization problem. To this end, we consider multi-agent composite optimization problem over time-varying noisy network in this paper. Specifically, we analyze the following problem:
\be
\mathrm{minimize}_{x\in\huaX} \ \ F(x)=\sum_{i=1}^NF_i(x):=\sum_{i=1}^N\big[f_i(x)+\chi_i(x)\big], \ \  \label{problem1}
\ee
where $\huaX$ is non-empty convex constraint set and each local cost function $f_i$ (only known to node $i$) is convex and maybe nonsmooth. $\chi_i$ is a simple convex regularization function associated with node $i$.
In recent years, there are a few works on distributed methods treating with the aforementioned problems with composite framework (e.g.  \cite{yuanmd}). \cite{yuanmd} mainly focuses on online optimization and develops an online two-point bandit feedback mirror decent based method. \cite{discom} analyzes the distributed optimization problem over relay-assisted networks. From a different perspective, the work in this paper considers the network with communication noise and attempts to develop distributed optimization methods that are suitable to noisy network. Meanwhile, we study the convergence of the distributed composite optimization methods in noisy circumstance.

In this paper, inspired by stochastic approximation theory in \cite{n2, n3}, we develop a class of stochastic optimization method for solving above composite optimization problem. The problems are considered  over time-varying network that has a class of communication noise effects among nodes in information transmission process. We propose distributed stochastic composite mirror descent (DSCMD-N method) for Problem (\eqref{problem1}). The convergence results are analyzed in detail. Specifically, we are interested in the convergence behavior of the methods under different selections of  stepsizes. The expected convergence bound and high probability convergence bound are established respectively. In what follows, the discussion on selection of stepsizes and corresponding convergence rates are provided. Note that, by taking composite regularization function into consideration, this work also extends the former works in same literature to a more general setting. For the proposed DSCMD-N, by implementing Bregman divergence instead of former Euclidean distance in works such as \cite{n2}, \cite{n3}, the DSCMD-N method extends the projection structure of these methods to more general setting. Explicit rate $O(\f{1}{\sqrt{T}})$ result is obtained for expected function error for DSCMD-N method under appropriate selection of stepsize.  The convergence behavior is described by convergence bound in terms of $\aaa_t$ (stepsize for stochastic gradient) and $r_t$ (decaying rate for noise vector) and some cross terms of them. The error bound obtained in this work describes some intrinsic trade-off between  $\{\aaa_t\}$ and  $\{r_t\}$.

The technical contributions of this paper can be summarized as follows:

(1) New method DSCMD-N is presented for distributed optimization over a class of noisy network. Existing works in the same literature such as \cite{n2}, \cite{n3} are all Euclidean projection based. By presenting DSCMD-N method, we extend these former works to a more general setting in the proposed network model. In contrast to previous work in the same literature (e.g. \cite{n3}), since the Bregman divergence is utilized, the underlying geometry structure of distributed optimization problem is better reflected. The flexible selection of mirror map (distance generating function) can enable us to generate efficient updates to face the noisy network optimization. As special cases, when we take distance generating function as the norm squared function  $\f{1}{2}\|\cdot\|^2$, the entropy induced function $\sum_{i=1}^n[x]_i\ln[x]_i$ or $l_p$ norm squared function $\f{1}{2}\|\cdot\|_p^2$, and take $\chi_i$ as some specific regularizers, the DSCMD-N method can include a wide range of algorithm class that previous works on noisy network optimization does not consider. In addition, the intrinsic results among the network noise, non-Euclidean structure and the composite terms are provided (Theorem \ref{thm1}).

(2) The convergence behaviors for DSCMD-N  are comprehensively investigated. We obtain two types of error bounds for expected error: expected bound and high probability bound. The bounds are in terms of some cross terms consisting of  stepsizes $\{\aaa_t\}$ and communication noise decaying rate $\{r_t\}$. The stepsize selection is comprehensively conducted under effects of $\{r_t\}$ in different orders. The corresponding convergence rates are obtained. To the best of our knowledge, all these rates are first achieved in the setting of optimization over noisy network. We also show that the optimal expected rate and high probability of $O(\f{1}{\sqrt{T}})$ can be obtained under some conditions on $\{\aaa_t\}$ and $\{r_t\}$. Furthermore, a new almost sure convergence type result is derived for the local sequence. This result is new in the literature of distributed optimization.

(3) Composite optimization is investigated in a class of time-varying noisy network. Hence, for different purposes on some concrete distributed optimization problems, flexible selection of regularization terms becomes possible. By  taking appropriate regularization terms into consideration, the proposed methods are potentially flexible to reflect certain structure features of the solution of distributed optimization problem.
This work allows the objective functions to be nonsmooth.
This fact makes the proposed methods more flexible to handle optimization problems when tough smoothness conditions are added on objective functions. Also, the methods are convenient for optimization over a class of time-varying network, in contrast to static network.

\textbf{Notation and terminology:}
Denote the $n$-dimension Euclidean space by $\mathbb R^n$, and the set of positive real numbers by $\mathbb R^{+}$. For a vector $v\in \mathbb R^n$, use $\|v\|$, $[v]_i$ to denote its Euclidean $2$-norm and its $i$th entry. Use $\|v\|_1$ to denote $1$-norm $|[v]_1|+|[v]_2|+\cdots+|[v]_n|$. The inner  product of two vectors $v_1$, $v_2$ is denoted by $\nn v_1, v_2\mm$. For a matrix $M\in\mathbb R^{n\times n}$, denote the element in $i$th row and $j$th column by $[M]_{ij}$. Use $I_n$ to denote the identity matrix.
A function $f$ is $\sigma_{f}$-strongly convex over domain $\huaX$ if for any $x,y\in \huaX$ and $z\in [0,1]$, $f(z x+(1-z)y)\leq z f(x)+(1-z)f(y)-\f{\sigma_{f}z(1-z)}{2}\|x-y\|^2$. Denote the gradient operator by $\nabla$, when $f$ is differentiable, the $\sigma_f$-strongly convex inequality above is equivalent to $f(x)\geq f(y)+\nn\nabla f(y),x-y\mm+\f{\sigma_{f}}{2}\|x-y\|^2$.
For two functions $f$ and $g$,  write $f(n)= O(g(n))$ if there exist $N<\infty$ and positive constant $C<\infty$ such that $f(n)\leq Cg(n)$ for $n\geq N$. For a random variable $X$, use $\mathbb E[X]$ to denote its expected value.

\section{Problem setting and preliminaries}

Let $\huaG^t=(\huaV,E^t,P^t)$ be a directed graph which denotes the information communication among the nodes at time $t$. $\huaV=(1,2,...,N)$ is the node set. $E^t=\{(i,j)|[P^t]_{ij}>0,i,j\in\huaV\}$ is the set of active links with $P^t$ being the weight matrix at time $t$. $(i,j)\in E^t$ corresponds to the case when agent $i$ and agent $j$ have information communication at time $t$.

The objective of the paper is to cooperatively solve the composite optimization Problem (\eqref{problem1})  through communication among the agents of a multi-agent system described by graph $\huaG^t$ in a constrained setting. The decision space $\huaX\subseteq \mbb R^n$ for the state variable $x$ is a convex and compact set. Recall that a compact constrained condition on $\huaX$ is standard and commonly considered in works on mirror descent type methods (e.g. \cite{md}, \cite{lmd}). For agent $i\in\huaV$, we assume that there is a corresponding local cost function $f_i$. $f_i$ is assumed to be convex and perhaps  nonsmooth. We assume that the set of nonempty optimal solution of the problems considered in this paper is denoted by $\huaX^*$ with optimal value $f(x^*)$ for any $x^*\in\huaX^*$.  The following standard assumption is made on the graph $\huaG^t$.
\begin{ass}\label{graph}
	\textnormal{The communication matrix $P^t$ is doubly stochastic. $i.e.$, $\sum_{i=1}^N[P^t]_{ij}=1$ and $\sum_{j=1}^N[P^t]_{ij}=1$ for any $i,j\in \huaV$. There exists some positive integer $B$ such that the graph $(\huaV,\cup_{t=sB+1}^{(s+1)B}E^t)$ is strongly connected for every $s\geq0$. There exists a scalar $0<\theta<1$ such that $[P^t]_{ii}\geq\theta$ for all $i\in\huaV$ and $t$, and $[P^t]_{ij}\geq\theta$ if $(j,i)\in E^t$.}
\end{ass}

The network model in Assumption 1 is widely used in
distributed multi-agent optimization community (e.g., \cite{s1}, \cite{sto}). In this paper,  $P(t,s)=P^tP^{t-1}\cdots P^s$ is used to denote transition matrix when $t\geq s$; the notation $P(t,t+1)=I_n$ is also used. The following consequence in \cite{s1} is basic for the analysis over multi-agent time-varying network.

\begin{lem}\label{Nedic}
	Under Assumption \ref{graph}, for all $i,j\in\huaV$ and $t,s$ satisfying $t\geq s\geq1$, we have
	\bea
	\nono\big|[P(t,s)]_{ij}-\f{1}{N}\big|\leq\ooo\gamma^{t-s},
	\eea
	in which
	\bea
	\nono&&\ooo=(1-\f{\ttt}{4N^2})^{-2},\\
	\nono&& \gamma=(1-\f{\ttt}{4N^2})^{\f{1}{B}}.
	\eea
\end{lem}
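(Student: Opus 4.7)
My plan is to reproduce the classical coefficient-of-ergodicity argument for products of doubly stochastic matrices that underlies the result in \cite{s1}. Since each $P^\tau$ is doubly stochastic, so is the product $P(t,s)$; in particular every column of $P(t,s)$ sums to $1$, and the column average is exactly $1/N$. Hence, defining the column spread
$$\Delta_j(t,s) := \max_{i\in\huaV}[P(t,s)]_{ij} - \min_{i\in\huaV}[P(t,s)]_{ij},$$
I get the pointwise bound $\bigl|[P(t,s)]_{ij}-\tfrac{1}{N}\bigr| \leq \Delta_j(t,s)$, and it suffices to prove geometric decay of $\Delta_j(t,s)$.

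Next I would show that $\Delta_j(\cdot,s)$ is non-increasing in its first argument. From the recursion $[P(t+1,s)]_{ij}=\sum_{\ell}[P^{t+1}]_{i\ell}[P(t,s)]_{\ell j}$ and row-stochasticity of $P^{t+1}$, each new column entry is a convex combination of the old ones, so the column maximum can only decrease and the column minimum can only increase. This one-step contraction is free but not strict; strict contraction requires a block argument.

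The core step is the block estimate: over any window of length $B$, I aim to show
$$\Delta_j(s+B,s) \leq (1-\delta)\,\Delta_j(s,s), \qquad \delta := \tfrac{\theta}{4N^2}.$$
For this, I fix any pair of rows $i,i'$ and exhibit, via the $B$-strong-connectivity of $\cup_{\tau=s+1}^{s+B}E^\tau$ together with $[P^\tau]_{ii}\geq\theta$ and $[P^\tau]_{ij}\geq\theta$ for active edges, a common column index $\ell^{*}$ at which both $[P(s+B,s)]_{i\ell^{*}}$ and $[P(s+B,s)]_{i'\ell^{*}}$ are bounded below by $\delta$; this overlap lets me write row $i$ and row $i'$ as convex combinations sharing mass $\delta$, which in turn contracts the spread by the factor $(1-\delta)$. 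The delicate part is getting the denominator $4N^2$ rather than something exponentially small in $N$: this uses double stochasticity to average forward and backward reachability bounds, so that the effective overlap is only polynomially small in $N$. This is exactly the step where the Nedi\'c--Ozdaglar refinement does the real work, and I expect it to be the main obstacle.

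Finally I would iterate. Writing $t-s = kB+r$ with $0\leq r<B$ and using the monotonicity of Step 2 to discard the $r$ leftover steps, iteration of the block bound yields $\Delta_j(t,s)\leq (1-\delta)^{k}$. Since $k \geq (t-s)/B - 1$, we obtain $\Delta_j(t,s) \leq (1-\delta)^{-1}\bigl((1-\delta)^{1/B}\bigr)^{t-s}$. Absorbing one further conservative $(1-\delta)^{-1}$ to cover small $t-s$ (where $\Delta_j$ may still be as large as $1$, while the geometric envelope has only decayed by one block) gives the stated bound with $\omega=(1-\delta)^{-2}$ and $\gamma=(1-\delta)^{1/B}$, matching the constants in the lemma.
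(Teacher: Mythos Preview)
The paper does not prove this lemma at all; it simply cites it as a known result from \cite{s1} (Nedi\'c--Ozdaglar), introducing it with ``The following consequence in \cite{s1} is basic for the analysis over multi-agent time-varying network.'' So there is no proof in the paper to compare your proposal against.

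That said, your sketch is a faithful outline of the standard argument behind the cited result: bound the deviation from $1/N$ by the column spread, use row-stochasticity for one-step monotonicity, use $B$-connectivity plus the diagonal and edge lower bounds to get a strict block contraction with factor $1-\theta/(4N^2)$, and iterate over $\lfloor (t-s)/B\rfloor$ blocks to get the geometric rate $\gamma=(1-\theta/(4N^2))^{1/B}$ with the prefactor $\omega=(1-\theta/(4N^2))^{-2}$. The one step you correctly flag as delicate --- obtaining the polynomial overlap $\theta/(4N^2)$ rather than an exponentially small one --- is indeed where the real work in \cite{s1} lies and would need to be filled in carefully; your proposal identifies the mechanism (double stochasticity combined with forward/backward reachability) but does not yet carry it out. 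For the purposes of this paper, citing \cite{s1} is sufficient.
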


The stochastic methods in this paper is first-order stochastic approximation based. We make some assumptions on subgradients of the objective functions. We assume that the nodes can only compute the noisy subgradients of its corresponding objective functions. In what follows, we use $\huaF_t$ to denote the $\sigma$-algebra of the history up to time $t$. In this paper, we assume that all random processes are adapted to the filtration $\huaF_t$. The following assumption on stochastic gradient is standard in stochastic constrained convex optimization studies (\cite{md}, \cite{sto}).
\begin{ass}\label{tidu}
	\textnormal{At any point $x\in\huaX$, let the stochastic subgradient $\ww{g}_i(x)$ be such that $\mbb E[\ww{g}_i(x)|\huaF_{t-1}]=g_i(x)\in\partial f_i(x)$ and $\mbb E[\|\ww{g}_i(x)\|^2|\huaF_{t-1}]\leq G_f^2$.}
\end{ass}

This paper focuses on the network optimization with communication noise. We assume that the noise exists over the network among agents. The noise needs to be considered in information communication process of state variables of agents. In our model, we consider the following type of noises among agents:  the communication noise between node $i$ and node $j$ at instance $t$ denoted by $\{n_{ij}^t\}$ with $n_{ij}^t=r_t\xi_{ij}^t$ with a noise magnitude decaying rate $\{r_t\}$ is assumed. For the random variable $\xi_{ij}$, we assume that the following assumption holds.

\begin{ass}\label{noise}
	\textnormal{At any time instance $t$, the noise on link $(i,j)$ is independent of the noise on link $(i',j')$ for $i\neq i'$, $j\neq j'$. The communication noise $\{r_t\xi_{ij}^t\}$, $i,j\in \huaV$ over the time-varying network is a random sequence with $\mbb E[\|\xi_{ij}^t\|^2|\huaF_{t-1}]\leq\nu$. }
\end{ass}

\begin{rmk}
	In stochastic distributed consensus control or optimization with noisy links, a common assumption is that the noise sequence $\{n_{ij}^t\}$ satisfies $\sum_{t=1}^{\infty}\mbb E[\|n_{ij}^t\|^2|\huaF_{t-1}]<\infty$ and has zero means and finite variances (e.g. \cite{noiau}, \cite{noiz}). This assumption and Borel-Cantelli lemma imply that the magnitude of the communication noises decays to zero:
	\be
	\nono \|n_{ij}^t\|\rightarrow0, \ a.s.
	\ee
	In this paper, in order to investigate some novel explicit convergence rates under theoretical framework of rate analysis, we assume that the noise $\{n_{ij}^t\}$ satisfies Assumption \ref{noise} with  magnitude decaying rate $r_t$. In fact, the noise shares a similar noise decaying feature with \cite{noiau}, \cite{noiac}:
	\be
	\nono \mbb E[\|n_{ij}^t\|]\rightarrow0.
	\ee
	Assumptions in this paper can include some types of noises that the assumption $\sum_{t=1}^{\infty}\mbb E[\|n_{ij}^t\|^2|\huaF_{t-1}]<\infty$ does not cover. For example, the noise $n_{ij}^t=r_t\xi_{ij}^t$ with `` $r_t\geq O(1/\sqrt{t})$ and all $\xi_{ij}^t$ have equal non-zero $\mbb E[\|\xi_{ij}^t\|^2]$'' satisfies assumptions in this paper, but does not satisfy assumption $\sum_{t=1}^{\infty}\mbb E[\|n_{ij}^t\|^2|\huaF_{t-1}]<\infty$ as in \cite{noiau}, since $\sum_{t=1}^{\infty}r_t=\infty$ when $r_t\geq O(1/\sqrt{t})$. The fact shows that our work can cover some noise types that the work under $\sum_{t=1}^{\infty}\mbb E[\|n_{ij}^t\|^2|\huaF_{t-1}]<\infty$ assumption can not handle. In practice, as stated in \cite{adver}, \cite{adv1}, \cite{noiau} and simulation examples of \cite{noiz}, there exists such type of noise model in common network engineering, for example, some injected false data and noise signals decay with time in some network adversarial attack, or the communication noise decays when a multi-agent system moves away gradually from noisy source, or the noise is caused by a damped external noise source.
\end{rmk}

The following Azuma-Hoeffding lemma (\cite{azum1}, \cite{azum2}) is needed to derive high probability bound and rate of DSCMD-N later.
\begin{lem}\label{azuma}
	Let $\{X_t\}$ be a martingale difference sequence satisfying $|X_t|\leq\tau_t$, then  for any $\eee>0$,
	\be
	\nono\mathrm{Prob}\big(\sum_{t=1}^TX_t\geq\eee\big)\leq \exp\big(-\f{\eee^2}{2\sum_{t=1}^T\tau_t^2}\big).
	\ee
\end{lem}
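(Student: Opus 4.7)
The plan is to use the classical Chernoff-bound strategy: for any $\lambda>0$, apply Markov's inequality to the exponential of $S_T=\sum_{t=1}^T X_t$, which gives $\mathrm{Prob}(S_T\geq\eee)\leq e^{-\lambda\eee}\mbb E[e^{\lambda S_T}]$. The remaining task is then to control the moment generating function $\mbb E[e^{\lambda S_T}]$ using the martingale difference structure, and finally to optimize over $\lambda$.

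To bound the MGF I would iterate conditional expectations. Writing $S_T=S_{T-1}+X_T$ with $S_{T-1}$ being $\huaF_{T-1}$-measurable, the tower property yields $\mbb E[e^{\lambda S_T}]=\mbb E\big[e^{\lambda S_{T-1}}\,\mbb E[e^{\lambda X_T}\mid\huaF_{T-1}]\big]$. The key ingredient is the conditional form of Hoeffding's inequality: if $X$ is a zero-mean random variable with $|X|\leq\tau$, then $\mbb E[e^{\lambda X}]\leq e^{\lambda^2\tau^2/2}$. Applied conditionally, using $\mbb E[X_t\mid\huaF_{t-1}]=0$ and $|X_t|\leq\tau_t$ from the hypothesis, this gives $\mbb E[e^{\lambda X_t}\mid\huaF_{t-1}]\leq e^{\lambda^2\tau_t^2/2}$. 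Iterating from $t=T$ down to $t=1$ peels off one factor at a time and produces $\mbb E[e^{\lambda S_T}]\leq\exp\!\big(\tfrac{\lambda^2}{2}\sum_{t=1}^T\tau_t^2\big)$.

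Combining the two steps, one obtains $\mathrm{Prob}(S_T\geq\eee)\leq\exp\!\big(-\lambda\eee+\tfrac{\lambda^2}{2}\sum_{t=1}^T\tau_t^2\big)$ for every $\lambda>0$. Minimizing the right-hand side over $\lambda>0$ by the choice $\lambda=\eee/\sum_{t=1}^T\tau_t^2$ delivers exactly the claimed bound $\exp\!\big(-\f{\eee^2}{2\sum_{t=1}^T\tau_t^2}\big)$. The main obstacle is the conditional Hoeffding estimate itself, which requires writing each realization of $X_t$ as a convex combination of $-\tau_t$ and $\tau_t$, invoking convexity of $x\mapsto e^{\lambda x}$, and then sharpening the resulting expression through a Taylor expansion of the auxiliary function $\phi(u)=\log\cosh(u)-u^2/2$; once this inequality is in place, the rest of the argument is a clean telescoping of conditional expectations and a one-dimensional calculus optimization.
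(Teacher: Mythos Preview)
Your argument is the standard Chernoff--Hoeffding proof of the Azuma inequality and is correct as written: Markov's inequality on $e^{\lambda S_T}$, the conditional Hoeffding bound $\mbb E[e^{\lambda X_t}\mid\huaF_{t-1}]\le e^{\lambda^2\tau_t^2/2}$ obtained from $|X_t|\le\tau_t$ and zero conditional mean, telescoping via the tower property, and optimization over $\lambda$ all go through exactly as you describe.

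There is nothing to compare against, however: the paper does not supply its own proof of this lemma. It is stated as the well-known Azuma--Hoeffding inequality and used as a black box later (in Theorems~\ref{DSMDpro} and~\ref{DSDApro}). So your proposal is not ``essentially the same'' as the paper's proof nor ``a different route''---it is simply a valid proof of a result the paper takes for granted.
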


In optimization literature, mirror descent is a powerful extension of classical gradient descent. Generally, in contrast to gradient descent, for a given decision space defined on a Hilbert space, the mirror descent can relax the Hilbert space structure and employ a mirror map $\Phi:\huaK\rightarrow \mbb R$ to better reflect the geometric properties of the decision variables from some Banach space $\huaK$. In this paper, we will consider $\huaK=\mbb R^n$ endowed with a norm $\|\cdot\|$ which may be a non-Euclidean norm, that can better reflect the non-Euclidean geometric structures of decision variable from $\mbb R^n$. To introduce the basic distributed mirror descent scheme, we consider a continuously differentiable $\sigma_{\ff}$-strongly convex mirror map (\emph{distance generating} function) $\ff:\mbb R^n\rightarrow \mbb R$, define the \emph{Bregman divergence} associated with $\ff$ as
\bea
\nono D_{\ff}(x,y)=\ff(x)-\ff(y)-\nn\nb\ff(y),x-y\mm.
\eea
In Section III, for the Bregman divergence, we need the following assumption. The assumption is standard in investigations of mirror descent type methods (\cite{md}, \cite{lmd}).

\begin{ass}
	\textnormal{We assume  that the mirror map $\ff$ is chosen such that $\|\nb\ff(x)-\nb\ff(y)\|\leq L_{\ff}\|x-y\|$ for any $x,y\in\Omega$ for some $L_{\ff}$. For any vectors $a$ and $\{b_i\}_{i=1}^N$ in $\mbb R^n$, the Bregman divergence satisfies the separate convexity in the following sense: $D_{\ff}(a,\sum_{i=1}^N\nu_ib_i)\leq\sum_{i=1}^N\nu_iD_{\ff}(a,b_i)$, $\nu_i\in [0,1]$ and $\sum_{i=1}^N\nu_i=1$.}
\end{ass}

\section{DSCMD-N Algorithm: main convergence results}
In this section, we consider Problem (\eqref{problem1}), minimizing $F(x)=\sum_{i=1}^NF_i(x):=\sum_{i=1}^N\big[f_i(x)+\chi_i(x)\big]$ over noisy network. We solve the problem by providing a distributed stochastic composite mirror descent method which we call it the \textbf{DSCMD-N} method. In the algorithm, for each $i\in\huaV$, the local variable $x_i^t$ evolves as follows
\begin{eqnarray}
	y_{i}^t&=&\sum_{j=1}^N[P^t]_{ij}(x_{j}^t+r_t\xi_{ij}^t), \label{md1}\\
	x_i^{t+1}&=&\arg\min_{x\in \huaX}\{\nn \ww{g}_i^t,x\mm+\f{1}{\aaa_t}D_{\ff}(x,y_i^t)+\chi_i(x)\},\label{md2}
\end{eqnarray}
where $[P^t]_{ij}$, $i,j\in\huaV$ denotes the elements of communication weight matrix $P^t$ satisfying the conditions in Assumption \ref{graph}. It denotes the weight assigned by node $i$ to the estimate coming from node $j$. In the algorithm we are concerned with the case when communication links are noisy with noise assumptions in Assumption \ref{noise}. Therefore, the node $i$ has only access to a noise corrupted value of its neighbor's local decision variable (noisy observation). (\eqref{md1}) describes the noisy information communication process between $i$ and its neighbours. Then, query the stochastic subgradient oracle at $y_{i}^t$ to get a stochastic subgradient $\ww{g}_i^t:=\ww{g}_i(y_i^t)$, such that $\mbb E[\ww{g}_i^t|\huaF_{t-1}]=g_i(y_i^t)\in\partial f_i(y_i^t)$ is a subgradient of $f_i$ at $y_i^t$. In (\eqref{md2}), we perform a Bregman projection for variable $y_i$ to decision space $\huaX$ to get variable $x_i^{t+1}$. A composite mirror descent scheme is considered in this Bregman projection with stepsize $\aaa_t$ and composite term $\chi_i(x)$. We remark that the simple composite function $\chi_i$ associated with node $i$ can be different from each other. Here, $\chi_i(x)$, $i\in\huaV$ are supposed to be some simple convex regularization function with supremum subgradient $G_{\chi_i}$. In this section $\partial\chi_i(x)$ is used to denote the \emph{subdifferential set} of $\chi_i$ at $x\in\huaX$. For the subgradients of $\chi_i$. We denote
\be
\nono G_{\chi_i}=\sup_{g\in\cup_{x\in\huaX}\partial\chi_i(x)}\|g\|
\ee
and
\be
\nono G_{\chi}=\sup_{i\in \huaV}G_{\chi_i}.
\ee
In this section, we also denote
\be
\nono D_{\ff,\huaX}^2=\sup_{x,y\in\huaX}D_{\ff}(x,y).
\ee
In fact, the finiteness of $G_{\chi}$ and $D_{\ff,\huaX}$ follows from the compactness of $\huaX$, the strong convexity of $\Phi$ implies $\sup_{x,y\in\huaX}\|x-y\|\leq\sqrt{\f{2}{\sigma_{\ff}}}D_{\ff,\huaX}$. In this paper, it is unnecessary to know the concrete value of $G_{\chi}$ and $D_{\ff,\huaX}$. The finiteness of them is enough to provide the rigorous convergence analysis of the algorithm. To investigate the convergence behavior of DSCMD-N, we denote the Bregman projection error by
\bea
\nono \e_i^t=x_i^{t+1}-y_i^t.
\eea
We start with the following error estimate on $\e_i^t$.

\begin{lem}\label{error}
	The Bregman projection error satisfies
	\be
	\nono\mbb E[\|\e_i^t\|]\leq\f{G_f+G_{\chi}}{\sigma_{\ff}}\aaa_t.
	\ee
\end{lem}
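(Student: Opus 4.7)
The plan is to exploit the first-order optimality condition for the composite Bregman projection in \eqref{md2}, then combine it with the $\sigma_{\Phi}$-strong convexity of the mirror map $\Phi$ to convert the optimality inequality into a bound on $\|\varepsilon_i^t\|$.

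First I would write down the optimality condition for the subproblem defining $x_i^{t+1}$: since $x_i^{t+1}$ minimizes a sum of a smooth term and the convex function $\chi_i$ over the convex set $\huaX$, there exists $h_{\chi_i}^t\in\partial\chi_i(x_i^{t+1})$ such that
\[
\Big\langle \ww{g}_i^t+\tfrac{1}{\aaa_t}\bigl(\nabla\Phi(x_i^{t+1})-\nabla\Phi(y_i^t)\bigr)+h_{\chi_i}^t,\ x-x_i^{t+1}\Big\rangle\geq 0 \qquad\forall x\in\huaX.
\]
Since $y_i^t$ need not lie in $\huaX$ in general, one small subtlety is how to insert $x=y_i^t$; but noting that the projection step can be equivalently phrased with $\nabla\Phi(y_i^t)$ replaced by $\nabla\Phi(\Pi_{\huaX}^{\Phi}(y_i^t))$, or arguing directly via the three-point identity for Bregman divergences, one obtains the inequality $\langle \nabla\Phi(x_i^{t+1})-\nabla\Phi(y_i^t),\ x_i^{t+1}-y_i^t\rangle \leq \aaa_t\,\langle \ww{g}_i^t+h_{\chi_i}^t,\ y_i^t-x_i^{t+1}\rangle$.

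Next I would apply $\sigma_{\Phi}$-strong convexity of $\Phi$ to the left-hand side to get $\sigma_{\Phi}\|\varepsilon_i^t\|^2$, and apply the duality inequality $\langle a,b\rangle \leq \|a\|_\ast\|b\|$ together with the triangle inequality to the right-hand side, yielding
\[
\sigma_{\Phi}\|\varepsilon_i^t\|^2 \leq \aaa_t\bigl(\|\ww{g}_i^t\|_\ast+\|h_{\chi_i}^t\|_\ast\bigr)\|\varepsilon_i^t\|.
\]
Cancelling one factor of $\|\varepsilon_i^t\|$ (the estimate is trivial when it is zero) gives the deterministic bound $\|\varepsilon_i^t\|\leq \frac{\aaa_t}{\sigma_{\Phi}}(\|\ww{g}_i^t\|_\ast+\|h_{\chi_i}^t\|_\ast)$.

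Finally I would take expectations and use the uniform subgradient bound $\|h_{\chi_i}^t\|_\ast\leq G_{\chi}$ together with Jensen's inequality applied to Assumption \ref{tidu}, $\mbb E[\|\ww{g}_i^t\|]\leq\sqrt{\mbb E[\|\ww{g}_i^t\|^2]}\leq G_f$, to conclude $\mbb E[\|\varepsilon_i^t\|]\leq \frac{G_f+G_{\chi}}{\sigma_{\Phi}}\aaa_t$. The main (minor) obstacle is the bookkeeping around inserting $y_i^t$ as a test point in the variational inequality when $y_i^t\notin\huaX$: this is handled either by the standard three-point identity $D_{\Phi}(x,y)-D_{\Phi}(x,z)-D_{\Phi}(z,y)=\langle \nabla\Phi(y)-\nabla\Phi(z),x-z\rangle$ applied with $x=y_i^t$, $z=x_i^{t+1}$, $y=y_i^t$, or by absorbing the projection of $y_i^t$ into the argument; both routes lead to the same estimate.
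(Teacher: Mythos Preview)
Your proposal is correct and follows essentially the same route as the paper: write the first-order optimality condition for \eqref{md2}, rearrange to get $\langle\nabla\Phi(x_i^{t+1})-\nabla\Phi(y_i^t),x_i^{t+1}-y_i^t\rangle\le\alpha_t\langle\ww g_i^t+h_i^{t+1},y_i^t-x_i^{t+1}\rangle$, apply $\sigma_\Phi$-strong convexity on the left and Cauchy--Schwarz on the right, cancel one factor of $\|\varepsilon_i^t\|$, and take expectations. The paper in fact simply substitutes $x=y_i^t$ into the variational inequality without discussing whether $y_i^t\in\mathcal X$, so your extra care on that point (via the three-point identity) only adds rigor; otherwise the arguments are identical.
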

\begin{proof}
	According to the first-order optimality condition, there exists $h_i^{t+1}\in\partial\chi_i(x_i^{t+1})$ such that
	\be
	\nono\nn\aaa_t \ww{g}_i^t+\nabla\ff(x_i^{t+1})-\nabla\ff(y_i^t)+\aaa_t h_i^{t+1},x-x_i^{t+1}\mm\geq0, \ \forall x\in\huaX.
	\ee
	Setting $x=y_i^t$ in above inequality, we obtain that
	\be
	\nono\nn\aaa_t \ww{g}_i^t+\nabla\ff(x_i^{t+1})-\nabla\ff(y_i^t)+\aaa_t h_i^{t+1},y_i^t-x_i^{t+1}\mm\geq0.
	\ee
	The above inequality implies that
	\be
	\nono\nn\aaa_t(\ww{g}_i^t+h_i^{t+1}),y_i^t-x_i^{t+1}\mm\geq\nn\nb\ff(y_t^t)-\nb\ff(x_i^{t+1}),y_i^t-x_i^{t+1}\mm.
	\ee
	Use Cauchy inequality to the left hand side and $\sigma_{\ff}$-strong convexity of $\ff$ to the right hand side of above inequality, it can be obtained that
	\be
	\nono\aaa_t(\|\ww{g}_i^t\|+G_{\chi})\|y_i^t-x_i^{t+1}\|\geq\sigma_{\ff}\|y_i^t-x_i^{t+1}\|^2.
	\ee
	Eliminate same term $\|y_i^t-x_i^{t+1}\|^2$ on both sides and take conditional expectation on $\huaF_{t-1}$, we have
	\be
	\mbb E\big[\|\e_i^t\|\big|\huaF_{t-1}\big]\leq\f{1}{\sigma_{\ff}}\big(\mbb E\big[\|\ww{g}_i^t\|\big|\huaF_{t-1}\big]+G_{\chi}\big)\aaa_t.
	\ee
	The desired result is obtained after taking total expectation of above inequality on both sides.
\end{proof}

We are ready to give the following disagreement result which is necessary to establish the main convergence result of this section. In what follows, for nodes with estimates $x_i^t$, $i\in \huaV$, we denote the average estimate of them at time $t$ by
\bea
\nono\bar{x}^t=\f{1}{N}\sum_{i=1}^Nx_i^t.
\eea
\begin{lem}\label{disagreement}
	Under Assumptions \ref{graph}-\ref{noise}, let $\{x_i^t\}$ be the sequences in DSCMD-N. Then for any $j\in\huaV$,
	\bea
	\nono&&\sum_{t=1}^T\sum_{i=1}^N\mbb E\big[\|x_i^t-x_j^t\|\big]\leq\f{2N\ooo}{1-\g}\|x_j^0\|\\
	&&+\big(4N+\f{2N^2\ooo}{1-\g}\big)\sum_{t=0}^T\Big[\f{G_f+G_{\chi}}{\sigma_{\ff}}\aaa_t+N\sqrt{\nu}r_t\Big].
	\eea
\end{lem}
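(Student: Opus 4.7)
My plan is to unroll the DSCMD-N update \eqref{md1}--\eqref{md2} into an explicit linear combination of initial states, communication noise, and Bregman projection errors, and then extract consensus contraction from Lemma~\ref{Nedic}.

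I would first set $w_i^t := r_t\sum_{j=1}^N[P^t]_{ij}\xi_{ij}^t$ and $\e_i^t = x_i^{t+1}-y_i^t$ to obtain the one-step identity $x_i^{t+1} = \sum_{k=1}^N[P^t]_{ik}x_k^t + w_i^t + \e_i^t$. Iterating this with the convention $P(t,t+1)=I_n$ would then give $x_i^{t+1} = \sum_{k=1}^N[P(t,0)]_{ik}x_k^0 + \sum_{s=0}^{t}\sum_{k=1}^N[P(t,s+1)]_{ik}(w_k^s+\e_k^s)$. Subtracting the analogous expansion for $x_j^{t+1}$ and applying Lemma~\ref{Nedic} with the triangle inequality would yield $|[P(t-1,s+1)]_{ik}-[P(t-1,s+1)]_{jk}|\leq 2\ooo\g^{t-s-2}$ for $s\leq t-2$; the term at $s=t-1$ corresponds to $P(t-1,t)=I_n$, which lies outside the range of Lemma~\ref{Nedic} and must be estimated directly via $|[I_n]_{ik}-[I_n]_{jk}|\leq 2$.

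Next, I would take expectations, using Lemma~\ref{error} to obtain $\mbb E[\|\e_k^s\|]\leq\f{G_f+G_\chi}{\sigma_\ff}\aaa_s$ and the crude bound $\|w_k^s\|\leq r_s\sum_l\|\xi_{kl}^s\|$ together with Assumption~\ref{noise} to obtain $\mbb E[\|w_k^s\|]\leq N\sqrt{\nu}r_s$. Summing $\|x_i^t-x_j^t\|$ first over $i\in\huaV$ and then over $t=1,\dots,T$, and exchanging the order of summation in the history part via $\sum_{t\geq s+2}\g^{t-s-2}\leq(1-\g)^{-1}$, I expect the initial-condition term to contribute $\f{2N\ooo}{1-\g}\|x_j^0\|$ under the standard common-initialization convention, the history sum to produce the prefactor $\f{2N^2\ooo}{1-\g}$ in front of $\sum_s\bigl[\f{G_f+G_\chi}{\sigma_\ff}\aaa_s+N\sqrt{\nu}r_s\bigr]$, and the boundary step $s=t-1$ to contribute the additive $4N$ multiplying the same bracket, reassembling into the stated bound.

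The main technical obstacle I anticipate is the careful bookkeeping at the boundary time step $s=t-1$, where $P(t-1,t)=I_n$ does not enjoy the geometric decay of Lemma~\ref{Nedic}; this term must be isolated, bounded via the per-node estimates on $\mbb E[\|w_k^{t-1}\|]$ and $\mbb E[\|\e_k^{t-1}\|]$, and then merged with the tail of the geometric series in $s$ so that all noise and projection-error contributions consolidate into the single bracketed sum $\sum_{t=0}^T\bigl[\f{G_f+G_\chi}{\sigma_\ff}\aaa_t+N\sqrt{\nu}r_t\bigr]$ appearing in the lemma. A secondary care point is selecting the coarse estimate $\mbb E[\|w_k^s\|]\leq N\sqrt{\nu}r_s$ rather than the sharper $\sqrt{\nu}r_s$ obtainable from row-stochasticity, since that is precisely what pins down the prefactor $N\sqrt{\nu}$ on $r_t$ in the final inequality.
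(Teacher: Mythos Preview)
Your proposal is correct and will establish the lemma, but it proceeds along a different line than the paper's own proof. The paper introduces the network average $\bar x^t=\f{1}{N}\sum_{i}x_i^t$, unrolls the recursion for both $x_i^t$ and $\bar x^t$, bounds $\mbb E[\|x_i^t-\bar x^t\|]$ via Lemma~\ref{Nedic}, and only then uses the triangle inequality $\|x_i^t-x_j^t\|\leq\|x_i^t-\bar x^t\|+\|x_j^t-\bar x^t\|$, doubling the intermediate bound. You instead subtract the expansions for $x_i^t$ and $x_j^t$ directly and bound $|[P(t-1,s)]_{ik}-[P(t-1,s)]_{jk}|$ by $2\ooo\g^{t-s-1}$, avoiding the detour through $\bar x^t$. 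The advantage of the paper's route is modularity: the per-node consensus bound $\mbb E[\|x_i^t-\bar x^t\|]$ (its equation~\eqref{dis}) is a reusable intermediate estimate. Your direct route is slightly leaner for this particular statement; in fact, treating the $s=t-1$ term tightly (only the two indices $k\in\{i,j\}$ give nonzero entries in $I_n$, each of size $1$, not $2$) yields the boundary contribution $2N$ rather than the $4N$ you anticipate, so your argument actually proves a marginally sharper inequality than the one stated. Regarding the initial condition, note that under common initialization your direct difference makes the $x_k^0$ term vanish identically, while the paper's proof via $\bar x^t$ produces $\sum_k\|x_k^0\|$; either way the stated constant $\f{2N\ooo}{1-\g}\|x_j^0\|$ is recovered as an upper bound.
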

\begin{proof}
	For $\forall i\in \huaV$, set $\xi_i^t=\sum_{j=1}^N[P^t]_{ij}\xi_{ij}^t$, by iterating recursively, it can be obtained that
	\bea
	\nono&&x_i^t=\sum_{s=1}^t\sum_{j=1}^N[P(t-1,s)]_{ij}(\e_j^{s-1}+r_{s-1}\xi_j^{s-1})\\
	\nono&& \ \ \ \ \ \  +\sum_{j=1}^N[P(t-1,0)]_{ij}x_j^0\\
	\nono&&\bar{x}^t=\f{1}{N}\sum_{s=1}^t\sum_{j=1}^N(\e_j^{s-1}+r_{s-1}\xi_j^{s-1})+\f{1}{N}\sum_{j=1}^Nx_j^0.
	\eea
	Then it follows that
	\bea
	\nono&&\|x_i^t-\bar{x}^t\|\leq\sum_{j=1}^N\big|[P(t-1,0)]_{ij}-\f{1}{N}\big|\cdot\|x_j^0\| \\
	\nono&& +\sum_{s=1}^t\sum_{j=1}^N\big|[P(t-1,s)]_{ij}-\f{1}{N}\big|\cdot\|\e_j^{s-1}+r_{s-1}\xi_j^{s-1}\| \\
	\nono&& \leq\ooo\g^{t-1}\sum_{i=1}^N\|x_j^0\|+\sum_{s=1}^{t-1}\ooo\g^{t-s-1}\sum_{j=1}^N\|\e_j^{s-1}+r_{s-1}\xi_j^{s-1}\| \\
	\nono&&\   \ +\f{1}{N}\sum_{j=1}^N\|\e_j^{t-1}+r_{t-1}\xi_j^{t-1}\|+\|\e_i^{t-1}+r_{t-1}\xi_i^{t-1}\|.
	\eea
	Since $\mbb E[\|\xi_j^{t-1}\|]=\mbb E[\|\sum_{l=1}^N[P^{t-1}]_{jl}\xi_{jl}^{t-1}\|]\leq\sum_{l=1}^N\mbb E[\|\xi_{jl}^{t-1}\|]\leq N\sqrt{\nu}$,
	then $\mbb E [\|\e_j^{t-1}+r_{t-1}\xi_j^{t-1}\|]\leq\mbb E[\|\e_j^{t-1}\|]+r_{t-1}\mbb E[\|\xi_j^{t-1}\|]\leq\f{G_f+G_{\chi}}{\sigma_{\ff}}\aaa_t+N\sqrt{\nu}r_{t-1}$.
	Combine these inequalities, it follows that, for any $i\in\huaV$,
	\bea\label{dis}
	\nono&&\mbb E[\|x_i^t-\bar{x}^t\|]\leq\ooo\g^{t-1}\sum_{j=1}^N\|x_j^0\|\\
	\nono&&\quad\quad\quad\quad\quad\quad\quad+N\sum_{s=1}^{t-1}\ooo\g^{t-s-1}\big(\f{G_f+G_{\chi}}{\sigma_{\ff}}\aaa_s+N\sqrt{\nu}r_s\big)\\
	&&\quad\quad\quad\quad\quad\quad\quad+2\big(\f{G_f+G_{\chi}}{\sigma_{\ff}}\aaa_t+N\sqrt{\nu}r_{t-1}\big).
	\eea
	Sum up both sides of above inequality from $t=1$ to $T$ and $i=1$ to $N$, it follows that
	\bea
	\nono&&\sum_{t=1}^T\sum_{i=1}^N\mbb E[\|x_i^t-\bar{x}^t\|]\leq\f{N\ooo}{1-\g}\sum_{j=1}^N\|x_j^0\|\\
	&&+\big(2N+\f{N^2\ooo}{1-\g}\big)\sum_{t=0}^T\big(\f{G_f+G_{\chi}}{\sigma_{\ff}}\aaa_t+N\sqrt{\nu}r_t\big).\label{sumdis}
	\eea
	Note that the bound on right hand side of (\eqref{dis}) does not depend on the index $i$. For any index $j\in\huaV$, $\mbb E[\|x_j^t-\bar{x}^t\|]$ also satisfies the bound in (\eqref{dis}). Sum up from $t=1$ to $T$ and $i=1$ to $N$ to $\mbb E[\|x_j^t-\bar{x}^t\|]$, use the triangle inequality $\mbb E[\|x_i^t-x_j^t\|]\leq\mbb E[\|x_i^t-\bar{x}^t\|]+\mbb E[\|x_j^t-\bar{x}^t\|]$, and combine with (\eqref{sumdis}), the result in theorem is obtained.
\end{proof}

\begin{lem}
	Let $\{x_i^t\}$, $\{y_i^t\}$ be the sequences in DSCMD-N. Let $\{\aaa_t\}$ be a non-increasing stepsize. Then we have
	\bea
	\nono&&\chi_i(x_i^{t+1})-\chi_i(x^*)+\nn\ww{g}_i^t,y_i^t-x^*\mm\\
	&&\leq\f{1}{\aaa_t}\big[D_{\ff}(x^*,y_i^t)-D_{\ff}(x^*,x_i^{t+1})\big]+\f{\aaa_t}{2\sigma_{\ff}}\|\ww{g}_i^t\|^2.\label{inner}
	\eea
\end{lem}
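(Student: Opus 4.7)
The target inequality is the standard one-step descent bound for composite mirror descent, adapted to the noisy iterate $y_i^t$ playing the role of the previous point. The plan is to begin exactly as in the proof of Lemma~4: invoke the first-order optimality condition for the minimization problem \eqref{md2} defining $x_i^{t+1}$, which yields some $h_i^{t+1}\in\partial\chi_i(x_i^{t+1})$ with
\[
\langle \alpha_t\widetilde{g}_i^t+\nabla\Phi(x_i^{t+1})-\nabla\Phi(y_i^t)+\alpha_t h_i^{t+1},\, x-x_i^{t+1}\rangle\ge 0,\qquad\forall x\in\huaX.
\]
I would then substitute $x=x^*$ (rather than $x=y_i^t$ as in the previous lemma) to get an inequality controlling $\alpha_t\langle \widetilde{g}_i^t+h_i^{t+1},\,x_i^{t+1}-x^*\rangle$ from above by $\langle \nabla\Phi(y_i^t)-\nabla\Phi(x_i^{t+1}),\,x_i^{t+1}-x^*\rangle$.

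The second step is to invoke the well-known three-point identity for Bregman divergences,
\[
\langle \nabla\Phi(y_i^t)-\nabla\Phi(x_i^{t+1}),\,x_i^{t+1}-x^*\rangle
=D_{\Phi}(x^*,y_i^t)-D_{\Phi}(x^*,x_i^{t+1})-D_{\Phi}(x_i^{t+1},y_i^t),
\]
and to use convexity of $\chi_i$ in the form $\langle h_i^{t+1},x_i^{t+1}-x^*\rangle\ge \chi_i(x_i^{t+1})-\chi_i(x^*)$. Combining these two ingredients with the optimality inequality gives
\[
\alpha_t\bigl[\chi_i(x_i^{t+1})-\chi_i(x^*)\bigr]+\alpha_t\langle \widetilde{g}_i^t,x_i^{t+1}-x^*\rangle
\le D_{\Phi}(x^*,y_i^t)-D_{\Phi}(x^*,x_i^{t+1})-D_{\Phi}(x_i^{t+1},y_i^t).
\]

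The remaining step, and the only subtle one, is to shift the base point of the inner product from $x_i^{t+1}$ to $y_i^t$. I would write $\langle \widetilde{g}_i^t,x_i^{t+1}-x^*\rangle=\langle \widetilde{g}_i^t,y_i^t-x^*\rangle+\langle \widetilde{g}_i^t,x_i^{t+1}-y_i^t\rangle$, move the cross term to the right, and apply Young's inequality
\[
\alpha_t\langle -\widetilde{g}_i^t,\,x_i^{t+1}-y_i^t\rangle
\le \frac{\alpha_t^{2}}{2\sigma_{\Phi}}\|\widetilde{g}_i^t\|^{2}+\frac{\sigma_{\Phi}}{2}\|x_i^{t+1}-y_i^t\|^{2}.
\]
The key cancellation is that $\sigma_{\Phi}$-strong convexity of $\Phi$ gives $D_{\Phi}(x_i^{t+1},y_i^t)\ge\tfrac{\sigma_{\Phi}}{2}\|x_i^{t+1}-y_i^t\|^{2}$, so the $\tfrac{\sigma_{\Phi}}{2}\|x_i^{t+1}-y_i^t\|^{2}$ term produced by Young's inequality is exactly absorbed by the $-D_{\Phi}(x_i^{t+1},y_i^t)$ term still sitting on the right-hand side. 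Dividing the resulting inequality by $\alpha_t$ yields the stated bound.

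The main obstacle, conceptually, is bookkeeping: one has to keep the composite subgradient $h_i^{t+1}$, the stochastic subgradient $\widetilde{g}_i^t$, and the two different base points $x_i^{t+1}$ and $y_i^t$ straight, and make sure the quadratic leftover from Young's inequality is set up to cancel against $D_{\Phi}(x_i^{t+1},y_i^t)$ rather than being merely bounded by it. The hypothesis that $\{\alpha_t\}$ is non-increasing plays no role here and is presumably invoked only when this inequality is later summed over $t$.
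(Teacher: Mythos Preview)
Your proposal is correct and follows essentially the same route as the paper's proof: first-order optimality at $x=x^*$, the three-point identity, the subgradient inequality for $\chi_i$, and the Young-type split of $\langle \widetilde g_i^t,x_i^{t+1}-x^*\rangle$ with the quadratic remainder absorbed by $D_{\Phi}(x_i^{t+1},y_i^t)\ge\tfrac{\sigma_{\Phi}}{2}\|x_i^{t+1}-y_i^t\|^2$. The only cosmetic difference is that the paper replaces $D_{\Phi}(x_i^{t+1},y_i^t)$ by its strong-convexity lower bound before performing the Young split, whereas you carry $D_{\Phi}(x_i^{t+1},y_i^t)$ to the end and cancel there; your observation that the non-increasing stepsize hypothesis is unused in this lemma is also correct.
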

\begin{proof}
	According to the first-order optimality of the DSCMD-N, there exists $h_i^{t+1}\in\partial\chi_i(x_i^{t+1})$,
	\be
	\nono\nn\aaa_t\ww{g}_i^t+\nb\ff(x_i^{t+1})-\nb\ff(y_i^t)+\aaa_t h_i^{t+1},x-x_i^{t+1}\mm\geq0, \  \forall x\in\huaX.
	\ee
	Set $x=x^*$ in above inequality, and rearrange terms, we have
	\bea
	\nono&&\nn\aaa_t\ww{g}_i^t,x_t^{t+1}-x^*\mm   \\
	\nono&&\leq\nn\nb\ff(y_i^t)-\ff(x_i^{t+1}),x_i^{t+1}-x^*\mm+\aaa_t\nn h_i^{t+1},x^*-x_i^{t+1}\mm  \\
	\nono&&\leq D_{\ff}(x^*,y_i^t)-D_{\ff}(x^*,x_i^{t+1})-D_{\ff}(x^*,y_i^t)\\
	\nono&&\ \ \ +\chi_i(x^*)-\chi_i(x_i^{t+1})\\
	\nono&&\leq D_{\ff}(x^*,y_i^t)-D_{\ff}(x^*,x_i^{t+1})-\f{\sigma_{\ff}}{2}\|x_i^{t+1}-y_i^t\|^2\\
	&&\ \ \ +\chi_i(x^*)-\chi_i(x_i^{t+1}),\label{nei1}
	\eea
	in which the second inequality follows from the three point inequality and the second inequality follows from the definition of $D_{\ff}(\cdot,\cdot)$ and $\sigma_{\ff}$-strong convexity of $\ff$. Also,
	\bea
	\nono&&\nn\aaa_t\ww{g}_i^t,x_t^{t+1}-x^*\mm\geq\nn\aaa_t\ww{g}_i^t,x_t^{t+1}-y_i^t\mm+\nn\aaa_t\ww{g}_i^t,y_i^t-x^*\mm\\
	\nono&&\quad\quad\quad\quad\quad\quad\quad\ \ \geq-\f{\aaa_t^2}{2\sigma_{\ff}}\|\ww{g}_i^t\|^2-\f{\sigma_{\ff}}{2}\|x_i^{t+1}-y_i^t\|^2\\
	&&\quad\quad\quad\quad\quad\quad\quad\ \ \ +\nn\aaa_t\ww{g}_i^t,y_i^t-x^*\mm. \label{nei2}
	\eea
	Combine (\eqref{nei1}) and (\eqref{nei2}), it follows that
	\bea
	\nono&&\nn\aaa_t\w{g}_i^t,y_i^t-x^*\mm\leq D_{\ff}(x^*,y_i^t)-D_{\ff}(x^*,x_i^{t+1})\\
	\nono&& \quad\quad\quad +\f{\aaa_t^2}{2\sigma_{\ff}}\|\ww{g}_i^t\|^2+\aaa_t\big[\chi_i(x^*)-\chi_i(x_i^{t+1})\big].
	\eea
	The proof is concluded after dividing both sides by $\aaa_t$ in above inequality.
\end{proof}

\begin{lem}\label{jianoi}
	Let $\{x_i^t\}$, $\{y_i^t\}$ be the sequences in DSCMD-N, then there holds $\mbb E[\|y_i^t-x_l^t\|]\leq\sum_{j=1}^N\mbb E[\|x_j^t-x_l^t\|]+N\sqrt{\nu}r_t$ for any $i,l\in\huaV$.
\end{lem}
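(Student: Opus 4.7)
The plan is to start from the definition of $y_i^t$ in the update rule \eqref{md1} and rewrite the quantity $y_i^t - x_l^t$ in a way that exposes the communication noise separately from the mixing of the local estimates. Since Assumption \ref{graph} guarantees that $P^t$ is row-stochastic, $\sum_{j=1}^N [P^t]_{ij}=1$, so I can subtract $x_l^t$ inside the convex combination and write
\[
y_i^t - x_l^t \;=\; \sum_{j=1}^N [P^t]_{ij}\bigl(x_j^t - x_l^t\bigr) + r_t\sum_{j=1}^N [P^t]_{ij}\,\xi_{ij}^t.
\]

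Next I would apply the triangle inequality and use the nonnegativity and boundedness $[P^t]_{ij}\in[0,1]$ (consequences of Assumption \ref{graph}) to drop the weights, obtaining
\[
\|y_i^t - x_l^t\| \;\le\; \sum_{j=1}^N \|x_j^t - x_l^t\| + r_t\sum_{j=1}^N \|\xi_{ij}^t\|.
\]
Taking total expectation then reduces the proof to bounding $\mathbb{E}[\|\xi_{ij}^t\|]$. By Jensen's inequality together with the second-moment bound $\mathbb{E}[\|\xi_{ij}^t\|^2\mid\huaF_{t-1}]\le\nu$ from Assumption \ref{noise}, each such expectation is at most $\sqrt{\nu}$, so the noise sum contributes at most $N\sqrt{\nu}\,r_t$, which gives the claimed inequality.

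There is no real obstacle here: the argument is essentially a book-keeping exercise relying on (i) row-stochasticity of $P^t$ to insert $x_l^t$ into the convex combination, (ii) triangle inequality, and (iii) Jensen's inequality applied to the noise moment bound. The only thing worth noting is that using $[P^t]_{ij}\le 1$ to drop the weights is loose (one could keep the convex combination and use $\max_j\|x_j^t-x_l^t\|$), but the stated sum form is exactly what the subsequent convergence analysis needs, so this is the right bound to record.
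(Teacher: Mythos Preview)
Your proposal is correct and follows essentially the same route as the paper's proof: rewrite $y_i^t-x_l^t$ via row-stochasticity of $P^t$, apply the triangle inequality, drop the weights using $[P^t]_{ij}\in[0,1]$, and bound the noise term via the second-moment assumption and Jensen. The only cosmetic difference is that the paper first takes conditional expectation on $\huaF_{t-1}$ before the total expectation, whereas you take total expectation directly; both are valid here.
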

\begin{proof}
	According to the structure of DSCMD-N and the fact that the matrix $P^t$ is doubly stochastic,
	\bea
	\nono&&\|y_i^t-x_l^t\|=\|\sum_{j=1}^N[P^t]_{ij}[x_j^t-x_l^t]+r_t\sum_{j=1}^N[P^t]_{ij}\xi_{ij}^t\| \\
	\nono&&\quad\quad\quad\quad \ \leq\sum_{j=1}^N[P^t]_{ij}\|x_j^t-x_l^t\|+r_t\sum_{j=1}^N\|\xi_{ij}^t\|.
	\eea
	Take expectation over $\huaF_{t-1}$, use Assumption \ref{noise} and the fact that $0\leq[P^t]_{ij}<1$, then take total expectation, the lemma is concluded.
\end{proof}

\begin{lem}\label{Brelem}
	Let $\{x_j^t\}$ be the  sequences in DSCMD-N,  the noise sequence $\{r_t\xi_{ij}^t\}$ is defined as before, we have $\mbb E[D_{\ff}(x^*,x_j^t+r_t\xi_{ij}^t)]\leq \mbb E[D_{\ff}(x^*,x_j^t)]+\sqrt{\f{2}{\sigma_{\ff}}}D_{\ff,\huaX}L_{\ff}\sqrt{\nu}r_t+L_{\ff}\nu r_t^2$.
\end{lem}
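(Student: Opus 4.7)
The plan is to expand $D_{\ff}(x^*,x_j^t+r_t\xi_{ij}^t)$ around $D_{\ff}(x^*,x_j^t)$ using the very definition of the Bregman divergence, isolate the change as a ``second order'' local term plus a ``cross'' term, bound each factor by the Lipschitz property of $\nb\ff$ and the assumed diameter bounds, and finally take expectation using the second-moment bound on $\xi_{ij}^t$.

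Concretely, write $y=x_j^t$ and $v=r_t\xi_{ij}^t$. Applying $D_{\ff}(x^*,\cdot)=\ff(x^*)-\ff(\cdot)-\nn\nb\ff(\cdot),x^*-\cdot\mm$ at the two points $y+v$ and $y$ and subtracting, the $\ff(x^*)$ contributions cancel and one gets
\[
D_{\ff}(x^*,y+v)-D_{\ff}(x^*,y)=\big[\ff(y)-\ff(y+v)+\nn\nb\ff(y+v),v\mm\big]+\nn\nb\ff(y)-\nb\ff(y+v),x^*-y\mm.
\]
The first bracket is exactly $D_{\ff}(y,y+v)$, which by the $L_{\ff}$-Lipschitz continuity of $\nb\ff$ (the standard descent-lemma consequence) is bounded by $\tfrac{L_{\ff}}{2}\|v\|^2$; absorbing this into the looser bound $L_{\ff}\|v\|^2$ matches the stated coefficient. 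For the cross term I would use Cauchy--Schwarz and the Lipschitz bound $\|\nb\ff(y)-\nb\ff(y+v)\|\leq L_{\ff}\|v\|$, followed by $\|x^*-y\|\leq D_{\huaX}\leq\sqrt{2/\sigma_{\ff}}\,D_{\ff,\huaX}$, the inequality already noted in the paper just after the assumption on $\Phi$.

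This yields the deterministic bound
\[
D_{\ff}(x^*,x_j^t+r_t\xi_{ij}^t)-D_{\ff}(x^*,x_j^t)\leq L_{\ff}r_t^2\|\xi_{ij}^t\|^2+L_{\ff}\sqrt{2/\sigma_{\ff}}\,D_{\ff,\huaX}\,r_t\|\xi_{ij}^t\|.
\]
Finally I would take conditional expectation over $\huaF_{t-1}$: the quadratic term is handled directly by $\mbb E[\|\xi_{ij}^t\|^2|\huaF_{t-1}]\leq\nu$, while the linear term is controlled by Jensen's inequality, $\mbb E[\|\xi_{ij}^t\|\,|\huaF_{t-1}]\leq\sqrt{\mbb E[\|\xi_{ij}^t\|^2|\huaF_{t-1}]}\leq\sqrt{\nu}$. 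Taking total expectation then recovers the claimed inequality.

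I do not anticipate a serious obstacle here; the only thing to watch is the correct identification of $D_{\ff}(y,y+v)$ inside the expansion (so that Lipschitzness of $\nb\ff$ can be invoked on the right pair of points) and the correct use of Jensen's inequality to turn the $L^2$ noise bound into an $L^1$ bound for the cross term. The rest is a direct application of assumptions already collected in the preliminaries.
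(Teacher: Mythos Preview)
Your proposal is correct and follows essentially the same approach as the paper: both expand the Bregman divergence, isolate the cross term $\nn\nb\ff(y)-\nb\ff(y+v),x^*-y\mm$ and bound it via Cauchy--Schwarz, the $L_{\ff}$-Lipschitz property, and $D_{\huaX}\leq\sqrt{2/\sigma_{\ff}}\,D_{\ff,\huaX}$, then take expectation using the second-moment noise bound. The only cosmetic difference is that the paper handles the quadratic remainder via the mean value theorem (obtaining $(1-\zeta)L_{\ff}\|v\|^2$) whereas you identify it as $D_{\ff}(y,y+v)$ and invoke the descent lemma (obtaining $\tfrac{L_{\ff}}{2}\|v\|^2$); both are then relaxed to $L_{\ff}\|v\|^2$.
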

\begin{proof}
	According to mean value formula, there exists a $\zeta\in[0,1]$ such that $\ff(x_j^t+r_t\xi_{ij}^t)=\ff(x_j^t)+\nn\nb\ff(x_j^t+\zeta r_t\xi_{ij}^t),r_t\xi_{ij}^t\mm$, then it follows that
	\bea
	\nono&&D_{\ff}(x^*,x_j^t+r_t\xi_{ij}^t)\\
	\nono&&=\ff(x^*)-\ff(x_j^t+r_t\xi_{ij}^t)-\nn\nb\ff(x_j^t+r_t\xi_{ij}^t),x^*-x_j^t-r_t\xi_{ij}^t\mm\\
	\nono&&=\ff(x^*)-\ff(x_j^t)-\nn\nb\ff(x_j^t+\zeta r_t\xi_{ij}^t),r_t\xi_{ij}^t\mm\\
	\nono&&\ \ \ -\nn\nb\ff(x_j^t+r_t\xi_{ij}^t),x^*-x_j^t-r_t\xi_{ij}^t\mm\\
	\nono&&=\ff(x^*)-\ff(x_j^t)-\nn\nb\ff(x_j^t+\zeta r_t\xi_{ij}^t)-\nabla\ff(x_j^t+r_t\xi_{ij}^t),r_t\xi_{ij}^t\mm\\
	\nono&&\ \ \  -\nn\nb\ff(x_j^t),x^*-x_j^t\mm-\nn\nb\ff(x_j^t+r_t\xi_{ij}^t)-\nb\ff(x_j^t),x^*-x_j^t\mm\\
	\nono&&\leq D_{\ff}(x^*,x_j^t)+(1-\zeta)L_{\ff}r_t^2\|\xi_{ij}^t\|^2+L_{\ff}D_{\huaX}r_t\|\xi_{ij}^t\|\\
	\nono&&\leq D_{\ff}(x^*,x_j^t)+D_{\huaX}L_{\ff}\|\xi_{ij}^t\|r_t+L_{\ff}\|\xi_{ij}^t\|^2r_t^2,
	\eea
	in which the first inequality follows from Cauchy inequality and gradient $L_{\ff}$-Lipschitz condition of $\ff$, the second inequality follows from the fact $0\leq1-\zeta\leq1$. Take conditional expectation on $\huaF_{t-1}$ on both sides, use Assumption \ref{noise} and note that $D_{\huaX}\leq\sqrt{\f{2}{\sigma_{\ff}}}D_{\ff,\huaX}$, the result is obtained after taking total expectation.
\end{proof}

Now return to (\eqref{inner}), take conditional expectation over $\huaF_{t-1}$ on both sides of (\eqref{inner}), we have
\bea
\nono &&\nn g_i(y_i^t),y_i^t-x^* \mm+\mbb E[\chi_i(x_i^{t+1})|\huaF_{t-1}]-\chi_i(x^*)\\
\nono&&\leq\f{1}{\aaa_t}\Big[D_{\ff}(x^*,y_i^t)-\mbb E[D_{\ff}(x^*,x_i^{t+1})|\huaF_{t-1}]\Big]\\
\nono&&\ \ \ +\f{\aaa_t}{2\sigma_{\ff}}\mbb E[\|\ww{g}_i^t\|^2|\huaF_{t-1}].
\eea
Take total expectation on both sides of above inequality, we have
\be
\Delta_{i,t}^1+\Delta_{i,t}^2\leq\Delta_{i,t}^3,\label{d3}
\ee
in which we denote
\bea
\nono&&\Delta_{i,t}^1=\mbb E[\nn g_i(y_i^t),y_i^t-x^*\mm],\\
\nono&&\Delta_{i,t}^2=\mbb E[\chi_i(x_i^{t+1})]-\chi_i(x^*),\\
\nono&&\Delta_{i,t}^3=\f{1}{\aaa_t}\big[\mbb E[D_{\ff}(x^*,y_i^t)]-\mbb E[D_{\ff}(x^*,x_i^{t+1})]\big]+\f{\aaa_t}{2\sigma_{\ff}}\mbb E[\|\ww{g}_i^t\|^2].
\eea

Before coming to the main result, we need the following lemma for $\Delta_{i,t}^3$.

\begin{lem}\label{d3up}
	Under Assumptions 1-4, if $\{\aaa_t\}$,  $\{r_t\}$ be non-increasing positive sequences, then the following bound result for $\Delta_{i,t}^3$ holds,
	\bea
	\nono&&\quad\sum_{t=1}^T\sum_{i=1}^N[\Delta_{i,t}^3]\leq\f{ND_{\ff,\huaX}^2}{\aaa_T}+\f{NG_f^2}{2\sigma_{\ff}}\sum_{t=0}^T\aaa_t\\
	&&\ \ \ \ \ \ \ \ +N\sqrt{\f{2}{\sigma_{\ff}}}D_{\ff,\huaX}L_{\ff}\sqrt{\nu}\sum_{t=1}^T\f{r_t}{\aaa_t}+NL_{\ff}\nu\sum_{t=1}^T\f{r_t^2}{\aaa_t}.\label{sumd3}
	\eea
\end{lem}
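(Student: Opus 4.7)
The plan is to bound the sum $\sum_{t=1}^T\sum_{i=1}^N \Delta_{i,t}^3$ by treating its two pieces separately: the gradient-norm piece and the telescoping Bregman-divergence piece.

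First, for the gradient term $\frac{\alpha_t}{2\sigma_\Phi}\mbb E[\|\ww g_i^t\|^2]$, I would simply apply the second moment bound $\mbb E[\|\ww g_i^t\|^2\mid\huaF_{t-1}]\leq G_f^2$ from Assumption \ref{tidu}, then take total expectation and sum over $i\in\huaV$ and $t=1,\ldots,T$. This immediately contributes $\frac{NG_f^2}{2\sigma_\Phi}\sum_{t=0}^T\alpha_t$ (possibly with a slight reindexing), matching the second term of the target bound.

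Second, for the Bregman part, the key is to compare $D_{\Phi}(x^*,y_i^t)$ with a convex combination of $D_{\Phi}(x^*,x_j^t)$'s. Since $y_i^t=\sum_{j=1}^N[P^t]_{ij}(x_j^t+r_t\xi_{ij}^t)$ and $P^t$ is row-stochastic, the separate convexity assumption on $\Phi$ gives
$$D_{\Phi}(x^*,y_i^t)\le\sum_{j=1}^N[P^t]_{ij}D_{\Phi}(x^*,x_j^t+r_t\xi_{ij}^t).$$
Taking expectation and applying Lemma \ref{Brelem} term by term yields
$$\mbb E[D_{\Phi}(x^*,y_i^t)]\le\sum_{j=1}^N[P^t]_{ij}\mbb E[D_{\Phi}(x^*,x_j^t)]+\sqrt{\tfrac{2}{\sigma_\Phi}}D_{\Phi,\huaX}L_{\Phi}\sqrt{\nu}\,r_t+L_{\Phi}\nu\,r_t^2.$$
Summing over $i$ and invoking \emph{column}-stochasticity of $P^t$ (which is where the doubly stochastic hypothesis enters) collapses the sum on $j$ and produces
$$\sum_{i=1}^N\mbb E[D_{\Phi}(x^*,y_i^t)]\le\sum_{j=1}^N\mbb E[D_{\Phi}(x^*,x_j^t)]+N\sqrt{\tfrac{2}{\sigma_\Phi}}D_{\Phi,\huaX}L_{\Phi}\sqrt{\nu}\,r_t+NL_{\Phi}\nu\,r_t^2.$$
The two noise-dependent terms here, once multiplied by $1/\alpha_t$ and summed over $t$, produce exactly the third and fourth terms of the claimed bound.

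Third, for the remaining piece $\sum_{t=1}^T\frac{1}{\alpha_t}\big[\sum_j\mbb E[D_{\Phi}(x^*,x_j^t)]-\sum_i\mbb E[D_{\Phi}(x^*,x_i^{t+1})]\big]$, I would apply Abel summation (rearrange as a telescoping sum with variable coefficients). Writing $A_t=\sum_{j=1}^N\mbb E[D_{\Phi}(x^*,x_j^t)]$, this becomes
$$\frac{A_1}{\alpha_1}-\frac{A_{T+1}}{\alpha_T}+\sum_{t=2}^TA_t\Big(\frac{1}{\alpha_t}-\frac{1}{\alpha_{t-1}}\Big).$$
Because $\{\alpha_t\}$ is non-increasing the bracketed differences are nonnegative, and the uniform bound $A_t\le ND_{\Phi,\huaX}^2$ then collapses the whole expression to at most $ND_{\Phi,\huaX}^2/\alpha_T$, which is the first term of the target.

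The step I expect to be most delicate is the combined use of separate convexity of $D_\Phi$ together with the double stochasticity of $P^t$: it is essential that row-stochasticity handles the application of separate convexity at each $i$, while column-stochasticity is what ultimately cancels the weights after summing in $i$, leaving a clean telescoping structure. Everything else is routine once that bookkeeping is in place.
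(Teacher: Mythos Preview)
Your proposal is correct and follows essentially the same route as the paper's proof: separate convexity of $D_\Phi$ on $y_i^t$, Lemma~\ref{Brelem} for the noise perturbation, column-stochasticity of $P^t$ to collapse the weights, and then an Abel/telescoping argument using the monotonicity of $\{\alpha_t\}$ together with the uniform bound $D_{\Phi,\huaX}^2$. Your explicit distinction between where row-stochasticity and column-stochasticity are used is in fact clearer than the paper's own presentation.
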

\begin{proof}
	Since $y_i^t=\sum_{j=1}^N[P^t]_{ij}(x_{j}^t+r_t\xi_{ij}^t)$, separate convexity of $D_{\ff}(\cdot,\cdot)$ implies that
	\bea
	\nono&&\sum_{t=1}^T\sum_{i=1}^N[\Delta_{i,t}^3]\\
	\nono&&\leq\sum_{t=1}^T\f{1}{\aaa_t}\Big[\sum_{i=1}^N\sum_{j=1}^N[P^t]_{ij}\mbb E[D_{\ff}(x^*,x_j^t+r_t\xi_{ij}^t)]-\sum_{i=1}^N\mbb E[D_{\ff}(x^*,x_i^{t+1})]\Big]\\
	\nono&&\ \ \ +\f{NG_f^2}{2\sigma_{\ff}}\sum_{t=0}^T\aaa_t \\
	\nono&&\leq\sum_{t=1}^T\f{1}{\aaa_t}\Big[\sum_{j=1}^N\mbb E[D_{\ff}(x^*,x_j^t)]-\sum_{i=1}^N\mbb E[D_{\ff}(x^*,x_i^{t+1})]\Big]\\
	\nono&&\ \ \ +N\big[2G_{\ff}+\sqrt{\f{2}{\sigma_{\ff}}}D_{\ff,\huaX}L_{\ff}\big]\sqrt{\nu}\sum_{t=1}^T\f{r_t}{\aaa_t}\\
	\nono&&\quad +NL_{\ff}\nu\sum_{t=1}^T\f{r_t^2}{\aaa_t}+\f{NG_f^2}{2\sigma_{\ff}}\sum_{t=0}^T\aaa_t.\\
	\nono&&=\sum_{i=1}^N\Big[\f{1}{\aaa_1}\mbb E[D_{\ff}(x^*,x_i^1)]+\sum_{t=2}^T\mbb E[D_{\ff}(x^*,x_i^t)](\f{1}{\aaa_t}-\f{1}{\aaa_{t-1}})\\
	\nono&&\ \ -\f{1}{\aaa_T}\mbb E[D_{\ff}(x^*,x_i^{T+1})]\Big]+N\sqrt{\f{2}{\sigma_{\ff}}}D_{\ff,\huaX}L_{\ff}\sqrt{\nu}\sum_{t=1}^T\f{r_t}{\aaa_t}\\
	\nono&&\ \ +NL_{\ff}\nu\sum_{t=1}^T\f{r_t^2}{\aaa_t}+\f{NG_f^2}{2\sigma_{\ff}}\sum_{t=0}^T\aaa_t,
	\eea
	in which the second inequality is obtained by double stochasticity of matrix $P^t$ and Lemma \ref{Brelem}, the result is obtained after eliminating same terms in the summation in above equality.
\end{proof}

Now we are ready to give the main result of this section. Denote
\bea
\nono \hat{x}_l^T=\f{1}{T}\sum_{t=1}^Tx_l^t,
\eea
and
\bea
\nono x^*=\arg\min_{x\in\huaX}f(x).
\eea
The following result describes the expected bound for DSCMD-N in terms of stepsizes $\{\aaa_t\}$, noise decaying rates $\{r_t\}$.
\begin{thm}\label{thm1}
	Let the Assumptions 1-4 hold. If $\{\aaa_t\}$,  $\{r_t\}$ are positive non-increasing sequences, then for DSCMD-N method, for any $l\in\huaV$, we have
	\bea
	\nono&&\mbb E[F(\hat{x}_l^T)]-F(x^*)\leq \f{C_1}{T}+\f{C_2}{T\aaa_T}+\f{C_3}{T}\sum_{t=0}^T\aaa_t+\f{C_4}{T}\sum_{t=0}^Tr_t\\
	&&\ \ \ \ \ \ \ \ \ \ \ \ \ \ \ \ \ \ \ \ \ \ \ \ \ \ +\f{C_5}{T}\sum_{t=0}^T\f{r_t}{\aaa_t}+\f{C_6}{T}\sum_{t=0}^T\f{r_t^2}{\aaa_t},
	\eea
	in which
	\bea
	\nono&&C_1=\f{2N\ooo}{1-\g}[(N+1)G_f+NG_{\chi}]\cdot\|x_j^0\|, \ C_2=ND_{\ff,\huaX}^2,\\
	\nono&&C_3=\bigg(\big(4N+\f{2N^2\ooo}{1-\g}\big)[(N+1)G_f+NG_{\chi}]\\
	\nono&& \ \ \ \ \ \  +NG_{\chi}\bigg)\cdot\f{G_f+G_{\chi}}{\sigma_{\ff}}+\f{NG_f^2}{2\sigma_{\ff}},\\
	\nono&&C_4=\big(4N+\f{2N^2\ooo}{1-\g}\big)[(N+1)G_f+NG_{\chi}]N\sqrt{\nu}\\
	\nono&& \ \ \ \ \ \ +(G_f+G_{\chi})N^2\sqrt{\nu},\\
	\nono&&C_5=\sqrt{\f{2}{\sigma_{\ff}}}D_{\ff,\huaX}L_{\ff}N\sqrt{\nu}, \ C_6=NL_{\ff}\nu,
	\eea
	and $\ooo=(1-\f{\ttt}{4N^2})^{-2}$, $\gamma=(1-\f{\ttt}{4N^2})^{\f{1}{B}}$.
\end{thm}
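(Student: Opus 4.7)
The plan is to reduce the theorem to a sum of per-step inequalities via convexity, then hook each term into the key inequality \eqref{inner} together with its telescoping form in Lemma~\ref{d3up}. First, Jensen's inequality applied to the convex $F_i = f_i + \chi_i$ gives
\[
\mbb E[F(\hat{x}_l^T)] - F(x^*) \leq \f{1}{T}\sum_{t=1}^T \sum_{i=1}^N \mbb E\bigl[f_i(x_l^t) - f_i(x^*) + \chi_i(x_l^t) - \chi_i(x^*)\bigr].
\]
The right-hand side is evaluated at $x_l^t$, whereas \eqref{inner} naturally controls quantities at $y_i^t$ and at $x_i^{t+1}$, so the next step is to shift the evaluation points.

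For the $f_i$ part, I would write $f_i(x_l^t)-f_i(x^*) = [f_i(x_l^t)-f_i(y_i^t)] + [f_i(y_i^t)-f_i(x^*)]$, bound the first bracket by $G_f\|x_l^t-y_i^t\|$ using Assumption~\ref{tidu}, and apply the subgradient inequality to the second to expose $\nn g_i(y_i^t),y_i^t-x^*\mm$, whose expectation is precisely $\Delta_{i,t}^1$. For $\chi_i$, I would split through $x_i^{t+1}$, bounding $\chi_i(x_l^t)-\chi_i(x_i^{t+1}) \leq G_\chi\|x_l^t - x_i^{t+1}\|$ and leaving $\chi_i(x_i^{t+1})-\chi_i(x^*)$, whose expectation is $\Delta_{i,t}^2$. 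Taking total expectation and invoking \eqref{d3} yields
\[
\mbb E[F_i(x_l^t)-F_i(x^*)] \leq \Delta_{i,t}^3 + G_f\,\mbb E[\|x_l^t-y_i^t\|] + G_\chi\,\mbb E[\|x_l^t-x_i^{t+1}\|].
\]

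To control the two disagreement expectations I would use Lemma~\ref{jianoi} for $\mbb E[\|x_l^t-y_i^t\|]$, and the triangle bound $\|x_l^t-x_i^{t+1}\|\leq\|x_l^t-y_i^t\|+\|\e_i^t\|$ combined with Lemma~\ref{error} for the other. Summing over $t$ and $i$, the node-level disagreements $\sum_t\sum_{i,j}\mbb E[\|x_i^t-x_j^t\|]$ are then bounded by Lemma~\ref{disagreement}, while the $\sum_{t,i}\Delta_{i,t}^3$ aggregate is handled by Lemma~\ref{d3up}. This delivers exactly the $\f{1}{\aaa_T}$, $\sum_t\aaa_t$, $\sum_t\f{r_t}{\aaa_t}$ and $\sum_t\f{r_t^2}{\aaa_t}$ patterns that appear in the statement, and dividing by $T$ produces the claimed form.

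The main obstacle is not any subtle estimate but the careful bookkeeping of constants. The disagreement output of Lemma~\ref{disagreement} contributes a $\f{G_f+G_\chi}{\sigma_{\ff}}\aaa_s+N\sqrt{\nu}r_s$ factor which, after being multiplied by $G_f$ and $G_\chi$ from the splitting step and merged with the analogous $\sum_t\aaa_t$ and $\sum_t r_t$ pieces produced by Lemma~\ref{d3up} and by the extra $\|\e_i^t\|$ term from Lemma~\ref{error}, must be aggregated into the composite constants $C_3$ and $C_4$; the initial condition piece $\|x_j^0\|$ and the $N D_{\ff,\huaX}^2/\aaa_T$ term produce $C_1$ and $C_2$, while the final two noise-ratio sums inherit $C_5$ and $C_6$ directly from Lemma~\ref{d3up}.
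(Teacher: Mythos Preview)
Your proposal is correct and follows essentially the same route as the paper: both arguments combine \eqref{d3} with Lemma~\ref{d3up} for $\Delta_{i,t}^3$, shift the evaluation point of $f_i$ from $x_l^t$ to $y_i^t$ via Lipschitzness and the subgradient inequality, shift $\chi_i$ from $x_l^t$ to $x_i^{t+1}$ via Lipschitzness and Lemma~\ref{error}, and then close with Lemmas~\ref{jianoi} and~\ref{disagreement}. The only cosmetic difference is that the paper lower-bounds $\Delta_{i,t}^1,\Delta_{i,t}^2$ to expose $f_i(x_l^t)-f_i(x^*)$ and $\chi_i(x_l^t)-\chi_i(x^*)$, whereas you start from the latter and upper-bound toward $\Delta_{i,t}^1+\Delta_{i,t}^2$; your direct use of Lemma~\ref{jianoi} with index $l$ even yields a slightly smaller coefficient on $\huaB_T$ than the paper's $(N+1)G_f+NG_\chi$, so the stated constants are still valid.
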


\begin{proof}
	We prove the result by estimating the terms in (\eqref{d3}). For any index $l\in\huaV$,
	\bea
	\nono&&\nn g_i(y_i^t),y_i^t-x^*\mm\\
	\nono&&\geq f_i(y_i^t)-f_i(x^*)\\
	\nono&&=f_i(y_i^t)-f_i(x_l^t)+f_i(x_l^t)-f_i(x^*)\\
	\nono&&\geq-G_f\|y_i^t-x_l^t\|+[f_i(x_l^t)-f_i(x^*)]\\
	\nono&&\geq -G_f\|y_i^t-x_i^t\|-G_f\|x_i^t-x_l^t\|+[f_i(x_l^t)-f_i(x^*)].
	\eea
	In which the second inequality follows from $g_i(x)=\mbb E[\ww{g}_i(x)|\huaF_{t-1}]\leq\sqrt{\mbb E[\|\ww{g}_i(x)\|^2|\huaF_{t-1}]}\leq G_f$.                               After taking expectation and using Lemma \ref{jianoi}, it follows that $\Delta_{i,t}^1\geq-G_f\sum_{j=1}^N\mbb E[\|x_j^t-x_i^t\|]-N\sqrt{\nu}G_fr_t-G_f\mbb E[\|x_i^t-x_l^t\|]+\mbb E[f_i(x_l^t)-f_i(x^*)]$. Denote $f=\sum_{i=1}^Nf_i$, $\chi=\sum_{i=1}^N\chi_i$ and denote the bound on the right hand side in Lemma \ref{disagreement} by $\huaB_T$, sum up both sides and use Lemma \ref{disagreement}, it follows that
	\bea
	\nono &&\sum_{t=1}^T\sum_{i=1}^N\big[\Delta_{i,t}^1\big]\geq-(N+1)G_f\huaB_T-N^2\sqrt{\nu}G_f\sum_{t=1}^Tr_t\\
	&& \ \ \ \ \ \  \ \ \ \ \ \ \ \ \ \ \ \ \ \ +\sum_{t=1}^T\mbb E[f(x_l^t)-f(x^*)].\label{sumd1}
	\eea
	On the other hand, for any index $l\in\huaV$,
	\bea
	\nono&&\chi_i(x_i^{t+1})-\chi_i(x^*)\\
	\nono&&=[\chi_i(x_i^{t+1})-\chi_i(y_i^t)]+[\chi_i(y_i^t)-\chi_i(x_l^t)]+[\chi_i(x_l^t)-\chi_i(x^*)]\\
	\nono&&\geq -G_{\chi}\|x_i^{t+1}-y_i^t\|-G_{\chi}\|y_i^t-x_l^t\|+[\chi_i(x_l^t)-\chi_i(x^*)].
	\eea
	After taking expectation on both sides, using Lemma \ref{error} and Lemma \ref{jianoi}, we have $\Delta_{i,t}^2\geq-G_{\chi}\f{G_f+G_{\chi}}{\sigma_{\ff}}\aaa_t-G_{\chi}\sum_{j=1}^N\mbb E[\|x_j^t-x_l^t\|]-G_{\chi}N\sqrt{\nu}r_t+\mbb E[\chi_i(x_l^t)-\chi_i(x^*)]$. Sum up from $i=1$ to $N$ and $t=1$ to $T$ on both sides, we obtain
	\bea
	\nono&&\sum_{t=1}^T\sum_{i=1}^N\big[\Delta_{i,t}^2\big]\geq-NG_{\chi}\f{G_f+G_{\chi}}{\sigma_{\ff}}\sum_{t=1}^T\aaa_t-NG_{\chi}\huaB_T\\
	&&\quad\quad -N^2G_{\chi}\sqrt{\nu}\sum_{t=1}^Tr_t+\sum_{t=1}^T\big[\chi(x_l^t)-\chi(x^*)\big].\label{sumd2}
	\eea
	Sum up both sides of (\eqref{d3}) from $i=1$ to $N$ and $t=1$ to $T$, combine it with (\eqref{sumd1}), (\eqref{sumd2}), Lemma \ref{d3up}. The desired result is obtained after substituting $\huaB_T$, using Lemma \ref{disagreement}, dividing both sides by $T$ and using the convexity of $F_i$, $i=1,2,...,N$.
\end{proof}

Under a boundedness assumption of stochastic gradient and network noise, the following high probability bound holds for DSCMD-N.
\begin{thm}\label{DSMDpro}
	Under the assumptions of Theorem \ref{thm1}, if we assume in addition that $\|\ww{g}_i^t\|\leq G_f$ and $\|\xi_{ij}^t\|^2\leq \nu$, then for DSCMD-N, for any $l\in\huaV$, we have, for any $\delta\in(0,1)$, with probability of at least $1-\delta$, there holds
	\bea
	\nono &&F(\hat{x}_l^T)-F(x^*)\leq \f{C_1}{T}+\f{C_2}{T\aaa_T}+\f{C_3}{T}\sum_{t=0}^T\aaa_t+\f{C_4}{T}\sum_{t=0}^Tr_t\\
	\nono&&+\f{C_5}{T}\sum_{t=0}^T\f{r_t}{\aaa_t}+\f{C_6}{T}\sum_{t=0}^T\f{r_t^2}{\aaa_t}+2\sqrt{2}G_fD_{\huaX}N\f{\sqrt{\log(1/\delta)}}{\sqrt{T}},
	\eea
	in which $C_1\sim C_6$ are defined as in Theorem \ref{thm1}.
\end{thm}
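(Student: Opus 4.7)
The plan is to upgrade the expectation bound of Theorem \ref{thm1} to a high probability statement by combining a pathwise analysis with Azuma--Hoeffding concentration (Lemma \ref{azuma}) applied to the martingale difference arising from the stochastic subgradient error. Because the new hypotheses $\|\ww{g}_i^t\|\leq G_f$ and $\|\xi_{ij}^t\|^2\leq \nu$ are deterministic, every inequality proved in Lemmas \ref{error}, \ref{disagreement}, \ref{jianoi}, \ref{Brelem} and \ref{d3up} continues to hold pathwise after stripping the outer expectations: the original proofs only invoked expectation to hit these moment bounds.

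First I would start from the pathwise inequality \eqref{inner} and sum it over $i$ and $t$ exactly as in Theorem \ref{thm1} to obtain
\[
\sum_{t=1}^T\sum_{i=1}^N\Big[\chi_i(x_i^{t+1})-\chi_i(x^*)+\nn \ww{g}_i^t, y_i^t-x^* \mm\Big]\leq \huaR_T,
\]
where $\huaR_T$ is the pathwise analogue of the right-hand side used before; it collects exactly the disagreement, Bregman-telescoping, and noise-coupling terms that give rise to the constants $C_1,\ldots,C_6$ once the sums over $t$ are completed.

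Second, I would split the stochastic subgradient inner product as
\[
\nn \ww{g}_i^t, y_i^t-x^* \mm=\nn g_i(y_i^t), y_i^t-x^*\mm+\nn \ww{g}_i^t-g_i(y_i^t), x_i^t-x^*\mm+\nn \ww{g}_i^t-g_i(y_i^t), y_i^t-x_i^t\mm.
\]
The first piece gives $f_i(y_i^t)-f_i(x^*)$ via the subgradient inequality and is processed exactly as in Theorem \ref{thm1}. The middle piece, summed over $i$, defines
\[
Z_t:=\sum_{i=1}^N\nn \ww{g}_i^t-g_i(y_i^t), x_i^t-x^*\mm,
\]
which is a martingale difference adapted to $\huaF_t$ (since $x_i^t$ and $x^*$ are $\huaF_{t-1}$-measurable) with per-step bound $|Z_t|\leq 2G_f N D_{\huaX}$ using $\|\ww{g}_i^t\|,\|g_i(y_i^t)\|\leq G_f$ and $\|x_i^t-x^*\|\leq D_{\huaX}$. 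The residual third piece is $O(G_f N r_t\sqrt{\nu})$ pathwise under $\|\xi_{ij}^t\|^2\leq\nu$, and is absorbed into the $C_4$-type coefficient of $\huaR_T$.

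Finally, applying Lemma \ref{azuma} to $\{Z_t\}$ with $\tau_t=2G_f N D_{\huaX}$ gives, for any $\eee>0$,
\[
\mathrm{Prob}\Big(\sum_{t=1}^T Z_t\geq\eee\Big)\leq\exp\Big(-\f{\eee^2}{8TG_f^2N^2D_{\huaX}^2}\Big).
\]
Setting the right-hand side equal to $\delta$ produces $\sum_{t=1}^T Z_t\leq 2\sqrt{2}G_f D_{\huaX} N\sqrt{T\log(1/\delta)}$ with probability at least $1-\delta$; dividing by $T$ and passing from the $f_i(x_l^t)$ averages to $F(\hat{x}_l^T)$ by convexity of $F_i$ yields precisely the extra $2\sqrt{2}G_f D_{\huaX} N\sqrt{\log(1/\delta)/T}$ term in the statement. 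The main obstacle I anticipate is carefully verifying that the noise-coupled residual inner product genuinely admits a pathwise bound of the claimed form, so that it does not introduce a second Azuma contribution; under the bounded-noise hypothesis this is routine because $\|y_i^t-x_i^t\|$ is deterministically controlled by $N r_t\sqrt{\nu}$ plus a disagreement piece already absorbed by Lemma \ref{disagreement}.
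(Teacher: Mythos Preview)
Your overall strategy---run the Theorem \ref{thm1} argument pathwise under the deterministic bounds and then apply Azuma--Hoeffding to the subgradient-noise inner product---is exactly the paper's approach. The difference is in where you place the martingale. The paper does not split $y_i^t-x^*$: it sets $X_{i,t}=\nn g_i(y_i^t)-\ww{g}_i^t,\,y_i^t-x^*\mm$ directly and applies Lemma \ref{azuma} to $X_t=\sum_{i=1}^N X_{i,t}$, using that in the paper's filtration convention $y_i^t$ is already $\huaF_{t-1}$-measurable (the communication noise $\xi_{ij}^t$ is revealed before the gradient oracle is queried at $y_i^t$, cf.\ the analogous measurability remark for $y^t$ in the proof of Theorem \ref{DSDApro}). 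With that in hand, $\mbb E[X_t\mid\huaF_{t-1}]=0$ and $|X_t|\le 2NG_fD_{\huaX}$, and one obtains the extra $2\sqrt{2}G_fD_{\huaX}N\sqrt{\log(1/\delta)}/\sqrt{T}$ term without generating any further residual.

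Your decomposition $y_i^t-x^*=(x_i^t-x^*)+(y_i^t-x_i^t)$ is a legitimate workaround if one is uneasy about the $\huaF_{t-1}$-measurability of $y_i^t$, but it is not free: the residual $\nn \ww g_i^t-g_i(y_i^t),\,y_i^t-x_i^t\mm$ is bounded by $2G_f\big(\sum_j\|x_j^t-x_i^t\|+N\sqrt{\nu}\,r_t\big)$, which after summing over $i,t$ contributes an additional $2G_f\huaB_T$ plus an additional $2G_fN^2\sqrt{\nu}\sum_t r_t$. These are not ``absorbed into the $C_4$-type coefficient'' alone---they feed into $C_1,C_3,C_4$ simultaneously via $\huaB_T$---so you would end up with strictly larger constants than the $C_1,\dots,C_6$ stated in Theorem \ref{thm1}. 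The paper's direct route avoids this inflation and recovers the constants exactly.
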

\begin{proof}
	For saving space, we just show the difference between the proof for this result and the above expected bound result. Come back to (\eqref{inner}), if we denote $\bar{\Delta}_{i,t}^1=\nn g_i(x_i^t),y_i^t-x^*\mm$, $\bar{\Delta}_{i,t}^2=\chi_i(x_i^{t+1})-\chi_i(x^*)$, $\bar{\Delta}_{i,t}^3=\f{1}{\aaa_t}\big[D_{\ff}(x^*,y_i^t)-D_{\ff}(x^*,x_i^{t+1})\big]$, $X_{i,t}=\nn g_i(y_i^t)-\ww{g}_i^t,y_i^t-x^*\mm$, then (\eqref{inner}) can be written in the form of $\bar{\Delta}_{i,t}^1+\bar{\Delta}_{i,t}^2\leq\bar{\Delta}_{i,t}^3+X_{i,t}$. $\bar{\Delta}_{i,t}^1$, $\bar{\Delta}_{i,t}^2$, $\bar{\Delta}_{i,t}^3$ corresponds to $\Delta_{i,t}^1$, $\Delta_{i,t}^2$, $\Delta_{i,t}^3$ in (\eqref{d3}) only up to a procedure of taking expectation. If we denote $X_t=\sum_{i=1}^NX_{i,t}$ and sum up both sides from $t=1$ to $T$ and $i=1$ to $N$, it follows that
	\be
	\sum_{t=1}^T\sum_{i=1}^N[\bar{\Delta}_{i,t}^1]+\sum_{t=1}^T\sum_{i=1}^N[\bar{\Delta}_{i,t}^2]\leq\sum_{t=1}^T\sum_{i=1}^N[\bar{\Delta}_{i,t}^3]+\sum_{t=1}^TX_{t}.\label{sumx}
	\ee
	Note that $\mbb E[X_t|\huaF_{t-1}]=0$, the bound condition $\|\ww{g}_i^t\|\leq G_f$ and Cauchy inequality implies $|X_t|\leq2NG_fD_{\huaX}$, then $\{X_t\}$ is a bounded martingale difference sequence. Use Azuma-Hoeffding inequality (Lemma \ref{azuma}) to $\{X_t\}$, we have for any $\eee>0$
	\be
	\mathrm{Prob}\big(\sum_{t=1}^TX_t\geq\eee\big)\leq\exp\big(-\f{\eee^2}{2T(2G_fD_{\huaX}N)^2}\big).
	\ee
	Setting the above probability upper bound to $\delta$, we have, with probability at least $1-\delta$,
	\be
	\sum_{t=1}^TX_t\leq2\sqrt{2}G_fD_{\huaX}N\sqrt{T}\sqrt{\log\f{1}{\delta}}.\label{neipro}
	\ee
	On the other hand, it is easy to see that, with bound assumptions $\|\ww{g}_i^t\|\leq G_f$ and $\|\xi_{ij}^t\|^2\leq\nu$ in hand, the estimate result of Lemma \ref{disagreement} and Lemma \ref{Brelem} holds without taking expectation. Therefore we know (\eqref{sumd3}), (\eqref{sumd1}), (\eqref{sumd2}) hold with $\Delta_{i,t}^1$, $\Delta_{i,t}^2$, $\Delta_{i,t}^3$ replaced by $\bar{\Delta}_{i,t}^1$, $\bar{\Delta}_{i,t}^2$, $\bar{\Delta}_{i,t}^3$. Combining these three estimates with (\eqref{neipro}) and (\eqref{sumx}), dividing both sides by $T$ and using the convexity of $F_i$, $i=1,2,...,N$, we obtain the desired result.
\end{proof}

\section{Convergence rates of DSCMD-N}
In this section, we provide a general framework for convergence rate analysis by selecting different stepsizes under different effects of noise decaying rates $\{r_t\}$. We also show that, in some situations of $\{r_t\}$, by selecting some stepsizes of $\{\aaa_t\}$, the best achievable rate of $O(\f{1}{\sqrt{T}})$ for centralized subgradient method for nonsmooth convex optimization, can be obtained for DSCMD-N. We present the results on expected rate and high probability rate in the following section.

The following proposition provides a general expected bound for expected error $\mbb E[F(\hat{x}_l^T)]-F(x^*)$ in terms of the total iteration step $T$ with a general stepsize consideration in form of $\aaa_t=\f{1}{(t+1)^{\kk_1}}$ and  noise decaying rate in form of $r_t=\f{1}{(t+1)^{\kk_2}}$.
\begin{pro}\label{MDgen}
	Under conditions of Theorem \ref{thm1}, if the  sequences $\{\aaa_t\}$, $\{r_t\}$  in the DSCMD-N method are $\aaa_t=\f{1}{(t+1)^{\kk_1}}$ and $r_t=\f{1}{(t+1)^{\kk_2}}$, $t=1,2,...,T$. Suppose that $0<\kk_1<\kk_2\leq1$ and $2\kk_2-\kk_1\neq1$, then for any $l\in \huaV$, we have
	\bea
	\nono&&\mbb E[F(\hat{x}_l^T)]-F(x^*)\\
	\nono&&\leq\bigg(C_1+\bigg|\f{1-2(2\kk_2-\kk_1)}{1-(2\kk_2-\kk_1)}\bigg|C_6\bigg)\f{1}{T}+2^{\kk_1}C_2\f{1}{T^{1-\kk_1}}\\
	\nono&&\ \ \ \ +\f{2^{1-\kk_1}C_3}{1-\kk_1}\f{1}{T^{\kk_1}}+\f{2^{1-\kk_2}C_4}{1-\kk_2}\f{1}{T^{\kk_2}}\\
	\nono&&\ \ \ +\f{2^{1-(\kk_2-\kk_1)}C_5}{1-(\kk_2-\kk_1)}\f{1}{T^{\kk_2-\kk_1}}+\f{C_6}{|1-(2\kk_2-\kk_1)|}\f{1}{T^{2\kk_2-\kk_1}},\\
	\nono&& \ \ if \ \kk_1\in(0,1), \ \kk_2\in(0,1);\ and \\
	\nono&& \leq \big( C_1+\f{2-\kk_1}{1-\kk_1}C_6\big)\f{1}{T}+\big(2^{\kk_1}C_2+\f{2^{\kk_1}C_5}{\kk_1}\big)\f{1}{T^{1-\kk_1}}\\
	\nono&&\ \ +\f{2^{1-\kk_1}C_3}{1-\kk_1}\f{1}{T^{\kk_1}}+4C_4\f{\ln T}{T},\ \ \ \ \  if \ \ \kk_1\in(0,1), \ \kk_2=1,
	\eea
	in which $C_1\sim C_6$ are defined as in Theorem \ref{thm1}.
\end{pro}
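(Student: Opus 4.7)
The plan is to specialize Theorem~\ref{thm1} by substituting $\aaa_t=(t+1)^{-\kk_1}$ and $r_t=(t+1)^{-\kk_2}$ into each of the six terms on its right-hand side, and then to apply standard integral-comparison estimates of the form $\sum_{t=0}^{T}(t+1)^{-\beta}\le 1+\int_{0}^{T}(s+1)^{-\beta}\,ds$ to reduce each summation to a closed-form power of $T$, or in the boundary regimes a logarithm of $T$. The $C_1/T$ term in Theorem~\ref{thm1} passes through unchanged, while $\f{C_2}{T\aaa_T}=C_2\f{(T+1)^{\kk_1}}{T}$ is bounded by $2^{\kk_1}C_2\,T^{\kk_1-1}$ via $(T+1)^{\kk_1}\le(2T)^{\kk_1}$, producing the $2^{\kk_1}C_2/T^{1-\kk_1}$ summand in both displayed bounds.

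For the sums $\sum_{t=0}^{T}\aaa_t$ and $\sum_{t=0}^{T}r_t$ the relevant exponents are $\kk_1\in(0,1)$ and $\kk_2\in(0,1]$. In the strict power-law regime $\kk_2<1$, each sum is bounded by $\f{2^{1-\beta}T^{1-\beta}}{1-\beta}$ with $\beta\in\{\kk_1,\kk_2\}$, yielding the $T^{-\kk_1}$ and $T^{-\kk_2}$ summands with the stated coefficients. The case $\kk_2=1$ is exactly the threshold where $\sum_{t=0}^T r_t\le 1+\ln(T+1)\le 2\ln T$ becomes logarithmic, accounting for the $4C_4\ln T/T$ summand in the second displayed bound. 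For the cross term $\sum_{t=0}^{T}r_t/\aaa_t=\sum (t+1)^{-(\kk_2-\kk_1)}$, the hypothesis $0<\kk_1<\kk_2\le 1$ keeps the exponent in $(0,1]$: when $\kk_2<1$ the sum is of order $T^{1-(\kk_2-\kk_1)}$, producing the $T^{-(\kk_2-\kk_1)}$ contribution; when $\kk_2=1$ it becomes $\sum(t+1)^{\kk_1-1}$ of order $T^{\kk_1}/\kk_1$, which I would absorb into the $T^{\kk_1-1}$ bucket to produce the combined coefficient $2^{\kk_1}C_2+2^{\kk_1}C_5/\kk_1$ in the second case.

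The remaining sum $\sum_{t=0}^{T}r_t^2/\aaa_t=\sum(t+1)^{-(2\kk_2-\kk_1)}$ requires a case split on whether $2\kk_2-\kk_1$ is greater than or less than $1$, which is precisely why the proposition excludes equality. In the convergent subcase $2\kk_2-\kk_1>1$ the sum is bounded by a finite constant, contributing $O(1/T)$ which I absorb into the $1/T$ bucket; in the divergent subcase $2\kk_2-\kk_1<1$ the sum grows as $T^{1-(2\kk_2-\kk_1)}/(1-(2\kk_2-\kk_1))$, yielding the explicit $T^{-(2\kk_2-\kk_1)}$ bucket. The two subcase constants are then merged into the uniform coefficient $\bigl|\f{1-2(2\kk_2-\kk_1)}{1-(2\kk_2-\kk_1)}\bigr|$ on the $1/T$ term and $\f{1}{|1-(2\kk_2-\kk_1)|}$ on the $T^{-(2\kk_2-\kk_1)}$ term. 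When $\kk_2=1$, the exponent becomes $2-\kk_1>1$, so the sum converges and is bounded by $\f{2-\kk_1}{1-\kk_1}$ via the same integral argument, which produces the $(2-\kk_1)/(1-\kk_1)$ constant in the $1/T$ coefficient of the second case.

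The main obstacle is not mathematical depth but careful bookkeeping of constants: tracking which integrated term lands in which $T^{-\theta}$ bucket as $2\kk_2-\kk_1$ crosses $1$, handling the separate logarithmic threshold at $\kk_2=1$, and verifying that the two subcase bounds for $\sum r_t^2/\aaa_t$ merge cleanly into the single uniform coefficient stated. Once this bookkeeping is complete, the conclusion follows by collecting the six contributions and dividing through by $T$ as in Theorem~\ref{thm1}.
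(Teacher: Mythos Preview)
Your proposal is correct and follows essentially the same route as the paper's own proof: both specialize Theorem~\ref{thm1} by substituting the power-law $\aaa_t$ and $r_t$, bound $\f{1}{T\aaa_T}$ via $(T+1)^{\kk_1}\le (2T)^{\kk_1}$, and then reduce each of the sums $\sum\aaa_t$, $\sum r_t$, $\sum r_t/\aaa_t$, $\sum r_t^2/\aaa_t$ by the integral-comparison estimate, with the same case distinctions at $\kk_2=1$ (harmonic sum producing $\ln T$) and at $2\kk_2-\kk_1=1$. The only cosmetic difference is that the paper handles the $r_t^2/\aaa_t$ sum by writing the single integral $\int_1^T t^{-(2\kk_2-\kk_1)}\,dt=\f{T^{1-(2\kk_2-\kk_1)}-1}{1-(2\kk_2-\kk_1)}$ and then taking absolute values to cover both sign regimes at once, whereas you describe separating into the convergent and divergent subcases and then merging; the resulting constants are identical.
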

\begin{proof}
	See Appendix.
\end{proof}

The following result provides a class of novel convergence rates for a general class of noise decaying rate.
\begin{cor}\label{tuitt}
	Under conditions of Theorem \ref{thm1}, suppose the  sequences $\{\aaa_t\}$, $\{r_t\}$  in the DSCMD-N method are $\aaa_t=\f{1}{\sqrt{t+1}}$ and $r_t=\f{1}{(t+1)^{\kk}}$, with $\kk\in (\f{1}{2},1)$ and $\kk\neq\f{3}{4}$.  If the constant $C_{\kk}$ is taken as
	\bea
	\nono&&C_{\kk}=\max\Big\{C_1+\Big|\f{1-2(2\kk-\f{1}{2})}{1-(2\kk-\f{1}{2})}\Big|C_6,\sqrt{2}C_2+2\sqrt{2}C_3,\\
	\nono&&\ \ \ \ \ \ \ \ \ \ \ \ \ \ \ \ \f{2^{1-\kk}C_4}{1-\kk},\f{2^{\f{3}{2}-\kk}C_5}{\f{3}{2}-\kk},\f{C_6}{|\f{3}{2}-2\kk|}\Big\}
	\eea
	then for any $l\in\huaV$, $T\geq3$,  the DSCMD-N method achieves an expected rate of $O(\f{1}{T^{\kk-\f{1}{2}}})$ in following sense:
	\be
	\mbb E[F(\hat{x}_l^T)]-F(x^*)\leq 5C_{\kk}/T^{\kk-\f{1}{2}}.
	\ee
	in which $C_1\sim C_6$ are defined as in Theorem \ref{thm1}.
\end{cor}
\begin{proof}
	By using Proposition \ref{MDgen} to $\kk_1=\f{1}{2}$ and $\kk_2=\kk$, it follows that
	\bea
	\nono&&\mbb E[F(\hat{x}_l^T)]-F(x^*)\leq\\
	\nono&&\Big(C_1+\Big|\f{1-2(2\kk-\f{1}{2})}{1-(2\kk-\f{1}{2})}\Big|C_6\Big)\f{1}{T}+\Big(\sqrt{2}C_2+2\sqrt{2}C_3\Big)\f{1}{\sqrt{T}}\\
	\nono&& +\f{2^{1-\kk}C_4}{1-\kk}\f{1}{T^{\kk}}+\f{2^{\f{3}{2}-\kk}C_5}{\f{3}{2}-\kk}\f{1}{T^{\kk-\f{1}{2}}}+\f{C_6}{|\f{3}{2}-2\kk|}\f{1}{T^{2\kk-\f{1}{2}}}.
	\eea
	Note that, when $\kk\in (\f{1}{2},1)$, there holds
	\be
	\nono \f{1}{T}<\f{1}{T^{\kk}}<\f{1}{\sqrt{T}}<\f{1}{T^{\kk-\f{1}{2}}},
	\ee
	and
	\be
	\nono \f{1}{T^{2\kk-\f{1}{2}}}<\f{1}{T^{\kk-\f{1}{2}}}.
	\ee
	Therefore, after taking maximum coefficient $C_{\kk}$ as above, the desired result holds.
\end{proof}

The following corollary shows a selection of $\aaa_t$ such that the DSCMD-N achieve the optimal rate in expectation under the case when the network has a communication noise decaying rate $r_t=\f{1}{t+1}$.
\begin{cor}\label{tuitui}
	Under conditions of Theorem \ref{thm1}, suppose the  sequences $\{\aaa_t\}$, $\{r_t\}$  in the DSCMD-N method are $\aaa_t=\f{1}{\sqrt{t+1}}$ and $r_t=\f{1}{t+1}$, $t=1,2,...,T$.  If we take $C=\max\{C_1+3C_6,4C_4,\sqrt{2}C_2+2\sqrt{2}C_3+2\sqrt{2}C_5\}$, then for any $l\in\huaV$, $T\geq3$,  the DSCMD-N method achieves an expected rate of $O(\f{1}{\sqrt{T}})$ as follow:
	\be
	\mbb E[F(\hat{x}_l^T)]-F(x^*)\leq 3C/\sqrt{T}.
	\ee
	in which $C_1\sim C_6$ are defined as in Theorem \ref{thm1}.
\end{cor}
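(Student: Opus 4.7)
The plan is to derive the corollary as a direct specialization of Proposition~\ref{MDgen} in the regime $\kk_1=\tfrac12$, $\kk_2=1$, and then absorb the resulting six terms into the target rate $3C/\sqrt{T}$ via two elementary term-by-term comparisons.

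First I would invoke the second case of Proposition~\ref{MDgen} (corresponding to $\kk_2=1$) with $\kk_1=\tfrac12$. The parameter substitution is routine: $\tfrac{2-\kk_1}{1-\kk_1}=3$, $2^{\kk_1}=\sqrt{2}$, $\tfrac{2^{\kk_1}}{\kk_1}=2\sqrt{2}$, $\tfrac{2^{1-\kk_1}}{1-\kk_1}=2\sqrt{2}$, so the bound becomes
\be
\nono \mbb E[F(\hat{x}_l^T)]-F(x^*)\leq (C_1+3C_6)\tfrac{1}{T}+(\sqrt{2}C_2+2\sqrt{2}C_5)\tfrac{1}{\sqrt{T}}+2\sqrt{2}C_3\tfrac{1}{\sqrt{T}}+4C_4\tfrac{\ln T}{T}.
\ee
Collecting the two $\tfrac{1}{\sqrt{T}}$ terms gives the coefficient $\sqrt{2}C_2+2\sqrt{2}C_3+2\sqrt{2}C_5$, which matches one of the three quantities entering the definition of $C$.

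Next I would replace the three coefficients $C_1+3C_6$, $\sqrt{2}C_2+2\sqrt{2}C_3+2\sqrt{2}C_5$, and $4C_4$ by their common upper bound $C$. The remaining task is to show
\be
\nono \frac{C}{T}+\frac{C}{\sqrt{T}}+\frac{C\ln T}{T}\ \leq\ \frac{3C}{\sqrt{T}},\qquad T\geq 3.
\ee
This follows from two trivial estimates: $1/T\leq 1/\sqrt{T}$ for $T\geq 1$, and $\ln T/T\leq 1/\sqrt{T}$ (equivalent to $\ln T\leq \sqrt{T}$), which holds on $T\geq 1$ since the function $\sqrt{T}-\ln T$ has its minimum at $T=4$ with value $2-\ln 4>0$. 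The condition $T\geq 3$ stated in the corollary is thus amply satisfied.

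There is really no substantial obstacle here: the only work is the bookkeeping of constants in the first step and the two elementary inequalities in the second. I would keep the write-up compact by noting the parameter substitution, grouping the $1/\sqrt{T}$ coefficients explicitly, and then closing out with the two comparisons $1/T,\ \ln T/T\leq 1/\sqrt{T}$ for $T\geq 3$.
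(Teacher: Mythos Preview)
Your proposal is correct and follows essentially the same route as the paper: invoke Proposition~\ref{MDgen} with $\kk_1=\tfrac12$, $\kk_2=1$, collect the $1/\sqrt{T}$ coefficients, replace each of the three coefficients by $C$, and use $1/T,\ \ln T/T\leq 1/\sqrt{T}$ for $T\geq 3$. The only cosmetic difference is that the paper chains $1/T\leq \ln T/T\leq 1/\sqrt{T}$ (which is where the restriction $T\geq 3$ actually matters, since $1/T\leq \ln T/T$ needs $\ln T\geq 1$), whereas you bound $1/T$ by $1/\sqrt{T}$ directly.
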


\begin{proof}
	By using Proposition \ref{MDgen} to the case when $\kk_1=1/2$ and $\kk_2=1$, we have
	\bea
	\nono&&\mbb E[F(\hat{x}_l^T)]-F(x^*)\leq \big(C_1+3C_6\big)\f{1}{T}+4C_4\f{\ln T}{T}\\
	\nono&&\ \ \ \ \ \ +\big(\sqrt{2}C_2+2\sqrt{2}C_3+2\sqrt{2}C_5\big)\f{1}{\sqrt{T}}.
	\eea
	After taking the maximum of the coefficients $C=\max\{C_1+3C_6,4C_4,\sqrt{2}C_2+2\sqrt{2}C_3+2\sqrt{2}C_5\}$ and noting that $\f{1}{T}\leq\f{\ln T}{T}\leq\f{1}{\sqrt{T}}$ when $T\geq3$, the result is obtained.
\end{proof}
\begin{rmk}
	In fact, for a general order pair $(\kk_1,\kk_2)$ of  $\aaa_t=O(\f{1}{(t+1)^{\kk_1}})$, $\kk_1\in(0,1)$ and $r_t=O(\f{1}{(t+1)^{\kk_2}})$, $\kk_2\in(0,1]$. $(\kk_1,\kk_2)=(1/2,1)$ is the unique pair of $(\kk_1,\kk_2)$ such that the convergence rate becomes $O(1/\sqrt{T})$. For other case, they are worse than this rate. Since for $\kk_1\in(0,1)$ and $\kk_2\in(0,1)$, by using similar idea with Corollary \ref{tuitui}, we have a rate of $O(\f{1}{T^{\kk}})$ with $\kk=\min\{1-\kk_1,\kk_1,\kk_2,\kk_2-\kk_1,2\kk_2-\kk_1\}=\min\{1-\kk_1,\kk_1,\kk_2-\kk_1\}$. If $0<\kk_1<1/2$, then $\kk=\min\{1-\kk_1,\kk_1,\kk_2-\kk_1\}\leq\kk_1<1/2$ which presents a worse rate. If $\kk_1=1/2$, then $\kk=\min\{1/2,\kk_2-1/2\}=\kk_2-1/2<1/2$, for $\kk_1<\kk_2<1$, which is also a worse rate than $O(1/\sqrt{T})$. Hence, the rate $O(1/\sqrt{T})$ can be obtained only when $(\kk_1,\kk_2)=(1/2,1)$.
\end{rmk}

Next, we consider the high probability convergence rate for DSCMD-N by presenting following results.
\begin{pro}\label{thm7}
	Under conditions of Theorem \ref{DSMDpro}, let the sequences $\{\aaa_t\}$, $\{r_t\}$ in the DSCMD-N method be $\aaa_t=\f{1}{\sqrt{t+1}}$ and $r_t=\f{1}{t+1}$, $t=1,2,...,T$. Then for any $l\in\huaV$, $T\geq3$, we have, for any $\delta\in (0,1)$, with probability of at least $1-\delta$,
	\bea
	\nono&& F(\hat{x}_l^T)-F(x^*)\leq \big(C_1+3C_6\big)\f{1}{T}+4C_4\f{\ln T}{T}\\
	\nono&&+\bigg[\sqrt{2}C_2+2\sqrt{2}C_3+2\sqrt{2}C_5+2\sqrt{2}G_fD_{\huaX}N\sqrt{\log(1/\delta)}\bigg]\f{1}{\sqrt{T}},
	\eea
	in which $C_1\sim C_6$ are defined as in Theorem \ref{thm1}.
\end{pro}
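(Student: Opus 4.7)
The plan is to observe that the high probability bound in Theorem \ref{DSMDpro} has exactly the same deterministic structural form as the expected bound in Theorem \ref{thm1}, differing only by the additional Azuma-Hoeffding residual term $2\sqrt{2}G_f D_{\huaX} N \sqrt{\log(1/\delta)/T}$. Consequently, the proof reduces to bounding the six sums $\sum \aaa_t$, $\sum r_t$, $\sum r_t/\aaa_t$, $\sum r_t^2/\aaa_t$, $1/\aaa_T$, and the leading constant term under the specific choice $\aaa_t = 1/\sqrt{t+1}$, $r_t = 1/(t+1)$; the stochastic fluctuation term is already handled uniformly inside Theorem \ref{DSMDpro}.

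For the sum estimates, I would identify this specific choice of $(\aaa_t,r_t)$ with the pair $(\kk_1,\kk_2) = (1/2,1)$ in Proposition \ref{MDgen}. Since the deterministic summation bounds carried out in that proof depend only on the sequences $\{\aaa_t\}$ and $\{r_t\}$ (not on whether we started from expectation or from the high-probability bound), I may cite the second display of Proposition \ref{MDgen} verbatim. That display shows the deterministic part of the bound is dominated by
\[
\big(C_1+3C_6\big)\tfrac{1}{T}+4C_4\tfrac{\ln T}{T}+\big(\sqrt{2}C_2+2\sqrt{2}C_3+2\sqrt{2}C_5\big)\tfrac{1}{\sqrt{T}},
\]
which is precisely the deterministic part appearing in the statement of Proposition \ref{thm7}.

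To conclude, I would start with the high probability inequality from Theorem \ref{DSMDpro} (which holds with probability at least $1-\delta$ under the assumed boundedness $\|\ww{g}_i^t\|\leq G_f$ and $\|\xi_{ij}^t\|^2\leq\nu$), substitute the two sequences, apply the sum estimates recalled from Proposition \ref{MDgen}, and simply append the Azuma-Hoeffding term $2\sqrt{2}G_f D_{\huaX} N \sqrt{\log(1/\delta)/T}$ that is already present in Theorem \ref{DSMDpro}. The restriction $T \geq 3$ is used only to assert $\tfrac{1}{T} \leq \tfrac{\ln T}{T} \leq \tfrac{1}{\sqrt{T}}$, as in Corollary \ref{tuitui}.

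The main conceptual obstacle is essentially bookkeeping: one has to verify that the sum estimates from Proposition \ref{MDgen}, which were phrased in an expected-value context, are purely deterministic consequences of the choice of $\aaa_t$ and $r_t$, so they transfer intact to the high probability setting. Once this is noted, no genuinely new calculation is required, and the result follows by direct substitution plus the Azuma-Hoeffding residual already bundled into Theorem \ref{DSMDpro}.
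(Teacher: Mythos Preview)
Your proposal is correct and mirrors the paper's own proof: start from the high-probability bound of Theorem \ref{DSMDpro}, plug in the sum estimates of Proposition \ref{MDgen} for $(\kk_1,\kk_2)=(1/2,1)$ exactly as in Corollary \ref{tuitui}, and carry along the extra Azuma--Hoeffding term $2\sqrt{2}G_fD_{\huaX}N\sqrt{\log(1/\delta)}/\sqrt{T}$. The only minor clarification is that the restriction $T\geq3$ is already used inside the proof of Proposition \ref{MDgen} to obtain $\f{1}{T}\sum_{t=0}^T r_t\leq 4\ln T/T$, not just for the ordering $1/T\leq \ln T/T\leq 1/\sqrt{T}$.
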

\begin{proof}
	The proof has the similar procedure with Corollary \ref{tuitui} by using the general bounds for terms of $\aaa_t$ and $r_t$. The result is obtained by combining an additional term of $2\sqrt{2}G_fD_{\huaX}N\f{\sqrt{\log(1/\delta)}}{\sqrt{T}}$ (this term appears since we consider high probability bound this time).
\end{proof}

The high probability optimal rate of $O(1/\sqrt{T})$ for DSCMD-N is obtained in the following corollary.

\begin{cor}
	Under conditions of Proposition \ref{thm7}, for any $\delta\in(0,1)$, set $C_{\delta}=\max\{C_1+3C_6,4C_4,\sqrt{2}C_2+2\sqrt{2}C_3+2\sqrt{2}C_5+2\sqrt{2}G_fD_{\huaX}N\sqrt{\log(1/\delta)}\}$. Then for any $l\in\huaV$, $T\geq3$, we have, for any $\delta\in (0,1)$, with probability of at least $1-\delta$, the DSCMD-N method achieves the following rate
	\be
	\nono F(\hat{x}_l^T)-F(x^*)\leq 3C_{\delta}/\sqrt{T}.
	\ee
\end{cor}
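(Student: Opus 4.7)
The plan is to invoke Proposition \ref{thm7} directly and then carry out the same coefficient-maximization trick used in Corollary \ref{tuitui}, but now with the extra Azuma--Hoeffding term included inside the maximum. First I would recall the high-probability bound from Proposition \ref{thm7}: with probability at least $1-\delta$,
\begin{equation}
\nono F(\hat{x}_l^T)-F(x^*)\leq (C_1+3C_6)\tfrac{1}{T}+4C_4\tfrac{\ln T}{T}+\bigl[\sqrt{2}C_2+2\sqrt{2}C_3+2\sqrt{2}C_5+2\sqrt{2}G_fD_{\huaX}N\sqrt{\log(1/\delta)}\bigr]\tfrac{1}{\sqrt{T}}.
\end{equation}

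Next I would group the three summands according to their decay rates $\tfrac{1}{T}$, $\tfrac{\ln T}{T}$, and $\tfrac{1}{\sqrt{T}}$, and observe that for $T\geq 3$ one has the chain of inequalities $\tfrac{1}{T}\leq \tfrac{\ln T}{T}\leq \tfrac{1}{\sqrt{T}}$ (since $\ln T\leq \sqrt{T}$ once $T\geq 3$). This lets me upper-bound each of the three terms by the common factor $1/\sqrt{T}$ multiplied by its coefficient.

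Then, setting
$C_\delta=\max\{C_1+3C_6,\ 4C_4,\ \sqrt{2}C_2+2\sqrt{2}C_3+2\sqrt{2}C_5+2\sqrt{2}G_fD_{\huaX}N\sqrt{\log(1/\delta)}\}$,
each of the three coefficients is dominated by $C_\delta$, so the right-hand side is bounded above by $3C_\delta/\sqrt{T}$, yielding the claimed rate with the same probability $1-\delta$. No step here is technically difficult; the only mild subtlety is verifying the elementary comparison $\ln T\leq \sqrt{T}$ for $T\geq 3$, which secures the uniform $1/\sqrt{T}$ decay across all three terms and hence the final clean bound.
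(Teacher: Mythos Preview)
Your proposal is correct and takes essentially the same approach as the paper. The paper's own proof simply states that the result follows directly from Proposition~\ref{thm7}, and your argument spells out precisely the intended steps---the same coefficient-maximization and the comparison $\tfrac{1}{T}\leq\tfrac{\ln T}{T}\leq\tfrac{1}{\sqrt{T}}$ for $T\geq 3$ already used in Corollary~\ref{tuitui}.
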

\begin{proof}
	The result follows directly from Proposition \ref{thm7}.
\end{proof}

\begin{rmk}
	Now we make a comparison between the results on DSCMD-N in this work and  some main existing works in this literature (\cite{n2}, \cite{n3}). \cite{n3} is a seminal work on distributed optimization over noisy network. Both of the works \cite{n2}, \cite{n3} consider standard distributed Euclidean projection-based algorithms to minimize the objective function $\sum_{i=1}^Nf_i(x)$ associated with local functions $f_i$, $i\in\huaV$. Their approaches rely on a standard Robbins-Monro  stepsize summability condition $\sum_{t=0}^{\infty} \aaa_{t}=\infty$ and $\sum_{t=0}^{\infty}\aaa_t^2<\infty$ to ensure the almost sure convergence of $\{x_i^t\}$ to the solution set $\huaX^*$.  In this work, DSCMD-N method is introduced in a more general setting (composite optimization) when regularization terms are considered. Hence we are able to handle the optimization problem from different angles by selecting different types of regularizers. Also, the Bregman divergence is utilized instead of the Euclidean projection in \cite{n2}, \cite{n3}, therefore, the proposed algorithm can better reflect the geometric feature of the underlying decision space when  selecting different types of mirror map (distance-generating function) $\Phi$.
\end{rmk}
\begin{rmk}
	Here, we mention a special case: when we consider regularizer $\chi_i=0$ and mirror map $\Phi=\f{1}{2}\|\cdot\|^2$, then the algorithm degenerates to \cite{n2} if a zeroth-order gradient oracle is used. Moreover, we relax the aforementioned stepsize assumptions (hence the stepsize $\aaa_t=\f{1}{(t+1)^{\kk}}$ with $\kk\in(1,1/2]$ can be used, this stepsize can not be considered and used  in \cite{n2}, \cite{n3}) and derive the explicit convergence rate in expectation.  On the way to the convergence in expectation, we also relax an assumption of noise $\{\xi_{ij}\}$ in contrast to \cite{n2}. In fact, we do not require the martingale difference condition $\mbb E[\xi_{ij}^t]=0$ to get expectation convergence results.  As an important counterpart of convergence in expectation, high probability bound and rate are also obtained via Azuma-Hoeffding inequality, which enriches the convergence class of distributed optimization methods in this literature.  These convergence rates and bounds are new in noisy network optimization setting.
\end{rmk}

\begin{rmk}
	In contrast to existing works on noisy network optimization, the paper also considers composite terms that serves as regularization terms for the composite optimization problem (local regularizer $\chi_i$, $i=1,2,...,N$ in Problem (\eqref{problem1}). The utilization of the regularization terms makes the method more flexible to present some structure types of the solution of optimization problem. Meanwhile, the structure of Problem (\eqref{problem1}) and DSCMD-N method allow the regularization term $\chi_i$ associated with agent $i$ to be independent of each other. There are several choices of $\chi_i$, $\eta$ that are often considered to promote different structure types of solutions of optimization problem. For example, the indicator function of $\huaX$, $I_{\huaX}(x)$; The $l^p$-norm squared function $\f{1}{2}\|x\|_p^2$, $p\in(1,2]$; Sparsity inducing regularizer $\lambda\|x\|_1$, $\lambda>0$; $l^{\infty}$-norm $\lambda\|x\|_{\infty}$, $\lambda>0$; entropy function $\sum_{i=1}^n[x]_i\log [x]_i$; mixed regularizer $\f{\lambda_1}{2}\|x\|_2^2+\lambda_2\|x\|_1$, $\lambda_1,\lambda_2>0$.
\end{rmk}

Till now, we observe that all the approximating sequences of convergence results in this paper are in weighted average form $\hat{x}_i=\f{1}{T}\sum_{i=1}^Tx_i^t$, $i=1,2,...,N$. The expectation convergence and high probability convergence result are derived. A question rises that, can we present some almost sure convergence results for local sequence $\{\hat{x}_i^t\}$ or $\{x_i^t\}$ in distributed composite optimization setting? To this end, we provide following almost sure convergence results for DSCMD-N.  In the following, we use $\mathrm{dist}(x,M)$ to denote the distance from a point $x$ to the closed set $M$. Namely, $dist(x,M)=\inf\{\|x-m\|:m\in M\}$.

\begin{cor}\label{aec}
	Under conditions of Theorem \ref{thm1}, in the DSCMD-N method, suppose the stepsize sequences $\{\aaa_t\}$, noise decreasing rate $\{r_t\}$  are $\aaa_t=\f{1}{\sqrt{t+1}}$ and $r_t=\f{1}{t+1}$, $t=1,2,...,T$.  If we take $C=\max\{C_1+3C_6,4C_4,\sqrt{2}C_2+2\sqrt{2}C_3+2\sqrt{2}C_5\}$, in which $C_1\sim C_6$ is as in Theorem \ref{thm1}. Then for any $l\in\huaV$, $T\geq3$, for sequence $\{x_i^t\}$ and their average version $\{\hat{x}_i^t\}$ generated from DSCMD-N method, we have almost surely,
	\be
	\nono\underline{\lim}_{T\rightarrow\infty}F(\hat{x}_i^T)=F(x^*), \ \underline{\lim}_{T\rightarrow\infty}\mathrm{dist}(\hat{x}_i^T,\huaX^*)=0, \ \forall i\in\huaV;
	\ee
	and
	\be
	\nono\lim_{T\rightarrow\infty}\min_{1\leq t\leq T}F(x_i^t)=F(x^*), \ \lim_{T\rightarrow\infty}\Big(\min_{1\leq t\leq T}\mathrm{dist}(x_i^t,\huaX^*)\Big)=0, \ \forall i\in\huaV.
	\ee
\end{cor}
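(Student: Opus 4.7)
The plan is to derive the almost-sure statements from the expected rate $\mbb E[F(\hat{x}_i^T)]-F(x^*)\leq 3C/\sqrt{T}$ of Corollary \ref{tuitui}, combined with Markov's inequality, Borel--Cantelli, and monotone convergence. A useful preliminary observation is that the argument in the proof of Theorem \ref{thm1} actually yields the sharper per-iterate summed bound
\[
\f{1}{T}\sum_{t=1}^{T}\mbb E\big[F(x_i^t)-F(x^*)\big]\leq \f{3C}{\sqrt{T}},
\]
since the convexity of $F$ is only invoked at the very last step of that proof, to pass from the average of the values $F(x_i^t)$ to $F(\hat{x}_i^T)$.

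For the liminf statement on $F(\hat{x}_i^T)$, I fix $\eee>0$ and apply Markov's inequality to the non-negative random variable $F(\hat{x}_i^T)-F(x^*)$, which gives $\mathrm{Prob}(F(\hat{x}_i^T)-F(x^*)\geq \eee)\leq 3C/(\eee\sqrt{T})$. Along the polynomial subsequence $T_k=k^3$ these probabilities are summable, so Borel--Cantelli yields $F(\hat{x}_i^{T_k})-F(x^*)<\eee$ eventually, almost surely; letting $\eee\downarrow 0$ through a countable sequence gives $\underline{\lim}_{T\rightarrow\infty} F(\hat{x}_i^T) = F(x^*)$ a.s. For the limit statement on $\min_{1\leq t\leq T} F(x_i^t)$, set $Z_T := \min_{1\leq t\leq T} F(x_i^t)-F(x^*)\geq 0$, which is non-increasing in $T$ by construction. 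The trivial bound $\min_t F(x_i^t)\leq \f{1}{T}\sum_t F(x_i^t)$ together with the displayed per-iterate inequality gives $\mbb E[Z_T]\leq 3C/\sqrt{T}\to 0$; since $Z_T$ decreases monotonically to some $Z_\infty\geq 0$ almost surely, monotone convergence forces $\mbb E[Z_\infty]=\lim_T \mbb E[Z_T]=0$, and hence $Z_\infty = 0$ a.s.

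For the two distance statements I appeal to compactness of $\huaX$ (closed, convex, and of finite diameter $D_{\huaX}$) together with continuity of $F$ on $\huaX$ (the subgradient bounds $G_f$ and $G_{\chi}$ make each $f_i$ and $\chi_i$ Lipschitz on $\huaX$). The extraction principle I use is: if $\{y_k\}\subset\huaX$ satisfies $F(y_k)\to F(x^*)$, then $\mathrm{dist}(y_k,\huaX^*)\to 0$; for if some subsequence remained at distance $\geq\eta>0$ from $\huaX^*$, by compactness it would have a cluster point $y^*\in\huaX$, continuity of $F$ would force $F(y^*)=F(x^*)$, hence $y^*\in\huaX^*$, contradicting the distance lower bound. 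Applying this principle to the subsequence $\{\hat{x}_i^{T_k}\}$ from the previous paragraph yields $\underline{\lim}_T \mathrm{dist}(\hat{x}_i^T,\huaX^*)=0$; applying it to the arg-min sequence $x_i^{t_T^*}$, $t_T^*\in\arg\min_{1\leq t\leq T} F(x_i^t)$, together with the monotonicity of $T\mapsto \min_{1\leq t\leq T}\mathrm{dist}(x_i^t,\huaX^*)$, yields $\lim_T \min_{1\leq t\leq T}\mathrm{dist}(x_i^t,\huaX^*)=0$.

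The main obstacle is the familiar gap between convergence in expectation and almost-sure convergence, which I bridge by choosing the polynomially spaced subsequence $T_k=k^3$ so that the Markov tail probabilities are summable; this is what lets Borel--Cantelli upgrade the rate $O(1/\sqrt{T})$ to a pathwise statement. The passage from function-value convergence to distance convergence is then routine once compactness of $\huaX$ and continuity of $F$ are in place, and requires no new stochastic ingredient.
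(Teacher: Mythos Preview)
Your argument is correct, but it diverges from the paper's in the way it passes from expectation to almost-sure convergence. The paper never invokes Markov's inequality or Borel--Cantelli; instead, for the first claim it applies Fatou's lemma directly: since $F(\hat{x}_i^T)-F(x^*)\geq 0$ and $\mbb E[F(\hat{x}_i^T)-F(x^*)]\to 0$, Fatou yields $\mbb E\big[\underline{\lim}_T (F(\hat{x}_i^T)-F(x^*))\big]=0$, hence $\underline{\lim}_T F(\hat{x}_i^T)=F(x^*)$ a.s. in one stroke. For the second claim the paper bounds $\min_{1\leq t\leq T}F(x_i^t)\leq F(\hat{x}_i^T)$ by convexity (rather than by the per-iterate summed bound you extract from the proof of Theorem~\ref{thm1}), applies Fatou again, and then uses monotonicity to upgrade the liminf to a limit. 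The distance statements are handled in both proofs by the same compactness/continuity argument.

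What your route buys is an explicit a.s.\ convergent subsequence $T_k=k^3$, which is more constructive and would also give, say, $F(\hat{x}_i^{T_k})\to F(x^*)$ a.s.\ rather than merely a liminf statement. What the paper's route buys is brevity: Fatou dispenses with the subsequence construction and the countable-intersection-over-$\eee$ step entirely. Your use of monotone convergence for $Z_T$ is essentially equivalent to the paper's Fatou-plus-monotonicity argument, just in a different order.
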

\begin{proof}
	See Appendix.
\end{proof}

\section{Conclusion}
This paper has studied a class of noisy network optimization problems. One distributed stochastic composite optimization problems over noisy network are considered. Based on Bregman non-Euclidean projection scheme, a new method DSCMD-N is presented to solve them respectively. Convergence of the methods are systematically studied. New convergence rates are obtained in several different situations under different detailed discussions on stepsize $\{\aaa_t\}$ and communication noise decreasing rate $\{r_t\}$. These new convergence results include expectation convergence, high probability convergence and almost sure convergence. These results enrich the exploration in noisy network optimization. The rates for expectation convergence and high probability convergence are first derived in the literature. Since we have considered randomness on both network links and gradients, the potential value of the methods are obvious in stochastic circumstances. The experiments verify the theoretical results in this paper.

\section{Appendix}
\subsection{Proof of Proposition \ref{MDgen}}
\begin{proof}
	Note that, under the conditions that $0<\kk_1<\kk_2<1$ and $2\kk_2-\kk_1\neq1$, $\f{1}{T\aaa_T}=\f{(T+1)^{\kk_1}}{T}\leq\f{(2T)^{\kk_1}}{T}=\f{2^{\kk_1}}{T^{1-\kk_1}}$, $T\geq1$. $\f{1}{T}\sum_{t=0}^T\aaa_t\leq\f{1}{T}\sum_{t=0}^T\f{1}{T}(1+\sum_{t=1}^T\aaa_t)\leq\f{1}{T}(1+\int_0^T\f{1}{(t+1)^{\kk_1}})\leq\f{1}{T}\f{(T+1)^{1-\kk_1}}{1-\kk_1}\leq\f{2^{1-\kk_1}}{1-\kk_1}\f{1}{T^{\kk_1}}$, similar reason implies $\f{1}{T}\sum_{t=0}^Tr_t\leq\f{2^{1-\kk_2}}{1-\kk_2}\f{1}{T^{\kk_2}}$ and $\f{1}{T}\sum_{t=0}^T\f{r_t}{\aaa_t}\leq\f{2^{1-(\kk_2-\kk_1)}}{1-(\kk_2-\kk_1)}\f{1}{T^{\kk_2-\kk_1}}$. Also, note that $\f{1}{T}\sum_{t=0}^T\f{r_t^2}{\aaa_t}=\f{1}{T}(1+\sum_{t=1}^T\f{1}{(t+1)^{2\kk_2-\kk_1}})\leq\f{1}{T}(2+\sum_{t=2}^T\f{1}{t^{2\kk_2-\kk_1}})\leq\f{1}{T}(2+\int_1^T\f{dt}{t^{2\kk_2-\kk_1}})=\f{1}{T}(2+\f{T^{1-(2\kk_2-\kk_1)}}{1-(2\kk_2-\kk_1)}-\f{1}{1-(2\kk_2-\kk_1)})\leq\big|\f{1-2(2\kk_2-\kk_1)}{1-(2\kk_2-\kk_1)}\big|\f{1}{T}+\f{1}{|1-(2\kk_2-\kk_1)|}\f{1}{T^{2\kk_2-\kk_1}}$. Substituting these bounds into Theorem \ref{thm1}, we obtain the first argument. For $\kk_1\in(0,1)$ and $\kk_2=1$, $\f{1}{T}\sum_{t=0}^Tr_t=\f{1}{T}\sum_{t=0}^T\f{1}{t+1}\leq1+\int_0^T\f{dt}{t+1}=1+\ln (T+1)\leq1+\ln 2T\leq2\ln 2T\leq2(\ln2+\ln T)\leq4\ln T$, $T\geq3$. $\f{1}{T}\sum_{t=0}^T\f{r_t}{\aaa_t}=\f{1}{T}\sum_{t=0}^T\f{1}{(t+1)^{1-\kk_1}}\leq\f{2^{\kk_1}}{\kk_1}\f{1}{T^{1-\kk_1}}$, $\f{1}{T}\sum_{t=0}^T\f{r_t^2}{\aaa_t}=\f{1}{T}\sum_{t=0}^T\f{1}{(t+1)^{2-\kk_1}}\leq1+\int_0^{\infty}\f{dt}{(t+1)^{2-\kk_1}}=\f{2-\kk_1}{1-\kk_1}\f{1}{T}$, the second argument is obtained after substituting these estimates into Theorem \ref{thm1} again.
\end{proof}

\subsection{Proof of Corollary \ref{aec}}
\begin{proof}
	According to Corollary \ref{tuitui}, we have $\mbb E[F(\hat{x}_i^T)]-F(x^*)\leq 3C/\sqrt{T}$, $i\in\huaV$. This implies $\lim_{T\rightarrow\infty}\Big(\mbb E[F(\hat{x}_i^T)]-F(x^*)\Big)=0$. since $F(\hat{x}_i^T)-F(x^*)$ is nonnegative for all $T\geq0$, by applying Fatou lemma we arrive at
	$\underline{\lim}_{T\rightarrow\infty}F(\hat{x}_i^T)=F(x^*), \ a.s.$
	On the other hand, we have already assumed that $\huaX$ is bounded and $F$ is continuous (since $F_i$ is continuous for all $i\in\huaV$). Weierstrass Theorem implies that the accumulation point set of $\{\hat{x}_i^T\}$ exists. The above inequality and the continuity of $F$ implies at least one of the accumulation points minimizes the summation of Problem (\eqref{problem1}), which means
	\be
	\underline{\lim}_{T\rightarrow\infty}\mathrm{dist}(\hat{x}_i^T,\huaX^*)=0, \  i\in\huaV.\label{as1}
	\ee
	Due to the convexity of $F$ and the fact that $\hat{x}_i^T$ is a convex combination of $x_i^1,x_i^2,...,x_i^T$. We have $\min_{1\leq t\leq T}F(x_i^t)\leq F(\hat{x}_i^T)$, $i\in\huaV$. Then it follows that $0\leq\min_{1\leq t\leq T}F(x_i^t)-F(x^*)\leq F(\hat{x}_i^T)-F(x^*)$, $i\in\huaV$. After taking expectation on both sides, we have
	\be
	0\leq\mbb E\Big[\min_{1\leq t\leq T}F(x_i^t)-F(x^*)\Big]\leq \mbb E\Big[F(\hat{x}_i^T)-F(x^*)\Big], i\in\huaV.
	\ee
	Take limit on both sides of above inequality and use Squeeze theorem, we have $\lim_{T\rightarrow\infty}\mbb E\Big[\min_{1\leq t\leq T}F(x_i^t)-F(x^*)\Big]$. Use Fatou lemma again, we have $\underline{\lim}_{T\rightarrow\infty}\Big[\min_{1\leq t\leq T}F(x_i^t)-F(x^*)\Big]=0$. Since $\min_{1\leq t\leq T}F(x_i^t)$ is a lower bounded non-increasing sequence in $T$, hence $\lim_{T\rightarrow\infty}\min_{1\leq t\leq T}F(x_i^t)$ exists and \\
	$\lim_{T\rightarrow\infty}\min_{1\leq t\leq T}F(x_i^t)=\underline{\lim}_{T\rightarrow\infty}\min_{1\leq t\leq T}F(x_i^t)=F(x^*)$, $i\in\huaV$. Then, using similar argument of getting (\eqref{as1}), we arrive at $\lim_{T\rightarrow\infty}\Big(\min_{1\leq t\leq T}\mathrm{dist}(x_i^t,\huaX^*)\Big)=0$, $i\in\huaV$.
\end{proof}

\end{document}